\numberwithin{equation}{section}
\newtheorem{theorem}{Theorem}[section]
\newtheorem{proposition}{Proposition}[section]
\newtheorem{lemma}{Lemma}[section]
\DeclareMathOperator{\Scal}{R}
\DeclareMathOperator{\R}{R}
\DeclareMathOperator{\Weyl}{Weyl}
\DeclareMathOperator{\Span}{Span}
 \newcommand{\<}{\left\langle}
\renewcommand{\>}{\right\rangle}
 \renewcommand{\(}{\left(}
\renewcommand{\)}{\right)}
\renewcommand{\[}{\left[}
\renewcommand{\]}{\right]}
\newcommand{\eps}{\varepsilon}
\begin{document}

\title[Clustering phenomena for linear perturbation of the Yamabe equation]{Clustering phenomena for linear perturbation of the Yamabe equation }

\author{Angela Pistoia}
\address{Angela Pistoia, Dipartimento di Scienze di Base e Applicate per l'Ingegneria, Sapienza Universit\`a di Roma,
via Antonio Scarpa 16, 00161 Roma, Italy}
\email{angela.pistoia@uniroma1.it}

\author{Giusi Vaira}
\address{Giusi Vaira, Dipartimento di Scienze di Base e Applicate per l'Ingegneria, Sapienza Universit\`a di Roma,
via Antonio Scarpa 16, 00161 Roma, Italy}
\email{vaira.giusi@gmail.com}

\maketitle

\centerline{\em This paper is  warmly dedicated to Professor Abbas Bahri  on the occasion of his 60th birthday}
 
\date{}
\begin{abstract}

 Let $(M,g)$ be a non-locally conformally flat  compact Riemannian manifold  with dimension $N\ge7.$  We are interested in finding positive solutions to the  linear perturbation of the Yamabe problem
$$-\mathcal L_g u+\epsilon u=u^{N+2\over N-2}\ \hbox{in}\ (M,g) $$
 where the first eigenvalue of the conformal laplacian  $-\mathcal L_g $ is positive and $\epsilon$ is a small positive parameter.
We prove that  for  any point $\xi_0\in M$ which is non-degenerate and non-vanishing minimum point of the Weyl's tensor and  for any integer $k$ there exists a family of  solutions developing $k$ peaks collapsing at $\xi_0$ as $\epsilon$ goes to zero. In particular, $\xi_0$ is a non-isolated   blow-up point.

\end{abstract}
 {\bf Keywords}: Yamabe problem, linear perturbation, blow-up points

{\bf  AMS subject classification}: 35J35, 35J60

\section{Introduction}
Let $\(M,g\)$ be a smooth, compact Riemannian manifold of dimension $N\ge3$.
 The  Yamabe problem consists in 
finding   metrics   of constant scalar curvature 
  in the conformal class of $g$. It is equivalent to finding a positive solution to the problem
  \begin{equation}\label{ya}
\mathcal L_g u+\kappa u^{N+2\over N-2}=0\ \hbox{in}\ M,\end{equation}
for some constant $\kappa$.
Here $\mathcal L_gu:= \Delta_g u+{N-2\over 4(N-1)}R_gu$ is the conformal laplacian, $\Delta_g$ is the Laplace-Beltrami operator and   $R_g$ is the scalar curvature of the manifold .

In particular,  if $u $ solves \eqref{ya}, then the scalar curvature of the metric $\tilde g= u^{4\over N-2} g$ is nothing but ${4(N-1)\over N-2}\kappa.$
Yamabe problem has been completely solved by Yamabe \cite{y}, Aubin \cite{a}, Trudinger \cite{t} and Schoen \cite{s1} (see also the proof given by Bahri \cite{ba}).
The solution is unique  in the case of negative scalar curvature and it is unique (up to a constant factor)  in the case of zero scalar curvature. The uniqueness is not true anymore in the case of positive scalar curvature. 
Indeed,   Schoen \cite{s2} and Pollack in \cite{p} exhibit examples where
  a large number of high energy solutions with high Morse
index exist. Thus it is natural to ask if the set of solutions is compact or not as it was raised by Schoen  in \cite{s3}.
It is also useful to point out that in the case of the round sphere $(\mathbb S^{N},g_0)$ the  compactness does not hold (see Obata in \cite{o}). Indeed,
the scalar curvature $R_{g_0}=N(N-1)$ and
the Yamabe problem \eqref{ya} reads as
 $$ -\Delta _{g_0}u+ {N(N-2)\over 4} u=u^{N+2\over N-2} \ \hbox{in}\ (\mathbb S^N,g_0) $$
  which is equivalent   (via the stereographic projection) to the equation in the Euclidean space
  \begin{equation}\label{criti} -\Delta U=U^{N+2\over N-2} \  \hbox{in}\  \mathbb R^N.\end{equation}
It is known that \eqref{criti} has infinitely many solutions, the so called {\em standard bubbles},
  \begin{equation}\label{bubble}U_{\mu,y}(x) = \mu^{-{N-2\over2}} U\({  x-y \over \mu}\) ,\ x,y\in\mathbb R^N,\ \mu>0,\ \hbox{where}\
 U (x):=\displaystyle{ {\alpha_N}{1\over\(1+|x |^2\)^{N-2\over2}}}.\end{equation} 
Here $\alpha_N:={N(N-2)}^{N-2\over4}.$

The compactness turns out to be true when the dimension of the manifold satisfies $3\le N\le 24$ as it was shown by Khuri, Marques and Schoen \cite{kms})
(previous  results were obtained  by Schoen \cite{s4}, Schoen and  Zhang \cite{sz}, Li and  Zhu \cite{lz}, Li and  Zhang \cite{lzha}, Marques \cite{m} and Druet \cite{d1}),
while it is false when $N\ge25$ thanks to the examples built by  Brendle \cite{b} and Brendle and Marques \cite{bm}.
 The proof of compactness strongly relies on proving sharp pointwise estimates at a blow-up point  of the solution.   In particular, when compactness holds every sequence of unbounded solutions to  \eqref{ya} must blow-up  at some points of the manifold which are necessarily  isolated and simple, i.e. 
  around each blow-up point $\xi_0$ the solution can be approximated by a standard bubble (see \eqref{bubble})
 $$u_n(x)\sim \alpha_N{ \mu_n^  {N-2\over 2}\over \(\mu_n^2+\(d_g(x,\xi_n\))^2 \)^{N-2\over 2}} \ \hbox{
 for some $\xi_n\to\xi_0$ and $\mu_n\to0.$}$$
 
 More precisely, 
 let $u_n$ be a sequence of solutions to problem \eqref{ya}. We say that $u_n$  blows-up at a point $\xi_0\in M$ if    there exists $\xi_n\in M$ such that
 $\xi_n\to \xi_0\ \hbox{and}\ u_n(\xi_n)\to+\infty.$   $\xi_0$ is said to be a {\em blow-up point} for $u_n.$ 
 Blow-up points can be classified according to the definitions introduced by  Schoen  in \cite{s3}.
$\xi_0\in M$ is an {\em isolated blow-up point} for $u_n$ if there exists $\xi_n\in M$ such that
  $\xi_n$ is a local maximum of $u_n,$ 
 $\xi_n\to \xi_0,$ 
   $u_n(\xi_n)\to+\infty$ and
   there exist $c>0$ and $R>0$ such that $$0<u_n(x)\le c{1\over d_g(x,\xi_n)^{ {N-2\over2}}}\ \hbox{for any}\ x\in B(\xi_0,R).$$  
 Moreover, $\xi_0\in M$ is an  {\em isolated simple blow-up point} for $u_n$ if  
    the function
    $$\hat u_n(r):=r^{N-2\over 2}{1\over |\partial B(\xi_n,r)|_g}\int\limits_{\partial B(\xi_n,r)}u_n d\sigma_g$$
    has a exactly one critical point in $(0,R).$  
 \\
 
 Motivated by the previous consideration,  we are led to study the  linear perturbation of the Yamabe problem
  \begin{equation}\label{Eq1}-\mathcal L_g u+\epsilon u=u^{N+2\over N-2},\ u>0,\ \hbox{in}\ (M,g) \end{equation}
 where the first eigenvalue of   $-\mathcal L_g $ is positive and $\epsilon$ is a small parameter.
In particular,  we address   the following questions.
\begin{itemize}
\item[(i)]
 Do there exist  solutions to \eqref{Eq1} which blow-up as $\epsilon\to0$?  
 \item[(ii)] Do there exist solutions to  \eqref{Eq1}   with non-isolated   blow-up  points, namely with   {\em clustering} blow-up points?
 \item[(iii)] Do there exist solutions to  \eqref{Eq1}   with non-isolated simple blow-up  points, namely with   {\em towering} blow-up points?
\end{itemize}

  Concerning question (i), 
 Druet in \cite{d1} proved that equation \eqref{Eq1} does not have any blowing-up solution when    $\epsilon<0$ and   $N=3,4,5$
   (except when the manifold is conformally equivalent to the round sphere). It is completely open the case when the dimension is $N\ge6.$
   The situation is completely different when 
   $\epsilon>0.$  Indeed, if  $N=3$  no blowing-up solutions exist  as proved by  Li-Zhu  \cite{lz}, while if $m\ge4$  blowing-up solutions do exist  as shown by Esposito, Pistoia and Vetois in \cite{epv}. In particular, if the dimension $N\ge6$ and the manifold is not locally conformally flat, Esposito, Pistoia and Vetois built solutions which blow-up at non-vanishing stable critical points $\xi_0$ of the Weyl's tensor, i.e. $|\Weyl_g(\xi_0)|_g\not=0.$
 In this paper, we show that the blowing-up point $\xi_0$ is not-isolated as soon as it is a  non-degenerate minimum point of the Weyl's tensor. This result 
   gives a positive answer to question (ii).  Finally,   a positive answer to question (iii) has been giving by Morabito, Pistoia and Vaira in a forthcoming paper  \cite{mpv}.\\
  
 Now, let us state the main result obtained in this paper.
  \begin{theorem}\label{cluster}
Let  $(M, g)$   be not locally conformally flat and   $N\geq 7$.   Let $\xi_0\in M$ be a non-degenerate minimum point of $\xi \to |{\rm Weyl}_g (\xi)|_g^2.$ Then, for any $k\in \mathbb N$,   there exist $\xi_\eps^j\in M$ for $j=1, \ldots, k$ and  $\eps_k>0$ such that for all $\eps\in (0, \eps_k)$ the problem \eqref{Eq1} has a solution $(u_\eps)_\eps$ with $k$ positive peaks at $\xi_\eps^j$ and $\xi_\eps^j \to \xi_0$ as $\eps\to 0$. 
\end{theorem}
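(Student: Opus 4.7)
\medskip

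\noindent\textbf{Proof proposal.}
The natural strategy is a Lyapunov--Schmidt finite-dimensional reduction around a sum of $k$ interacting bubbles. Following the paper's notation, one seeks
\[
u_\eps = \sum_{j=1}^k \Vj + \phi_\eps,
\]
where $\Vj$ is a suitably projected/corrected version of a standard Euclidean bubble \eqref{bubble} of concentration rate $\mu_j$, centered at some $\xi_\eps^j$ with $\xi_\eps^j \to \xi_0$ as $\eps\to 0$. The parameters $\mu_j$ and the peak locations (encoded by $\sigma_j$ inside a normal chart at a base point $\xi$ close to $\xi_0$) are to be determined, and $\phi_\eps$ is a small correction orthogonal to the approximate kernel generated by the derivatives of the $\Vj$ in these parameters.

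First one solves for $\phi_\eps$ by a standard contraction argument in a weighted norm that controls both the bubble scale and the inter-bubble scale. This uses the non-degeneracy of the limiting bubble \eqref{bubble} together with sharp pointwise estimates on the error produced by $\sum_j \Vj$ in equation \eqref{Eq1}, and yields $\phi_\eps$ as a $C^1$ map of the parameters. The problem then reduces to finding critical points of the reduced functional $J_\eps$, obtained by evaluating the energy of \eqref{Eq1} at $\sum_j \Vj + \phi_\eps$. One expands $J_\eps$ to a sufficiently high order, collecting: (a) $k$ copies of the single-peak expansion, which for $N \geq 7$ non-locally conformally flat carries a Weyl correction of the form $\mu_j^4 |\Weyl_g(\xi_\eps^j)|_g^2$ together with an $\eps \mu_j^2$ term from the linear perturbation, as in Esposito--Pistoia--Vetois; (b) bubble-bubble interaction terms depending on the mutual distances between peaks; and (c) higher-order geometric errors.

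Using the hypothesis that $\xi_0$ is a non-degenerate minimum of $|\Weyl_g|_g^2$, one Taylor-expands $|\Weyl_g(\xi_\eps^j)|_g^2$ around $\xi_0$: the gradient vanishes and the Hessian is positive definite. Combined with the precise form of the interaction terms in (b), this leads, after rescaling all parameters to the natural order determined by $\eps$, to a leading-order problem for the positions $(\sigma_j)$ and the concentrations $(\mu_j)$ that admits a non-degenerate critical configuration --- for instance a symmetric one, where the $\sigma_j$ are placed at the vertices of a regular $k$-point arrangement and the concentrations are all equal. Once such a configuration is located, the implicit function theorem provides a nearby critical point of the full reduced functional $J_\eps$, yielding the desired $k$-peak solution $u_\eps$ with peaks $\xi_\eps^j \to \xi_0$.

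The main obstacle is the reduced-energy expansion: one must track several competing small parameters ($\eps$, the overall concentration scale, and the inverse cluster spacing) simultaneously, and ensure that the effects coming from the Weyl Hessian at $\xi_0$ and from the bubble interactions appear at the same order and dominate all remainders. In practice this forces the ansatz $\Vj$ to include higher-order correctors --- typically built from the Green's function of $\mathcal L_g$, in the spirit of \cite{epv} --- that kill the leading geometric errors before they mask the effects of interest. Once the expansion is carried out to sufficient accuracy, the existence of a non-degenerate critical configuration of the leading problem reduces to an essentially algebraic computation, and the perturbation argument closes the proof.
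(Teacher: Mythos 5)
Your architecture coincides with the paper's: a Lyapunov--Schmidt reduction around $k$ corrected bubbles clustering at $\xi_0$, a contraction argument for the remainder $\phi_\eps$, and a reduced-energy expansion in which the Hessian of $|\Weyl_g|_g^2$ at $\xi_0$ competes with the bubble--bubble interactions. Two remarks on details you leave implicit: for $N\ge 7$ in the non-locally conformally flat case the corrector is not a Green's-function term (that is the relevant correction in low dimensions or in the locally conformally flat case) but the function $V$ of Proposition \ref{bubbleV}, solving a linear problem in $\mathbb R^N$ whose right-hand side carries the curvature terms at $\xi_0$; and the balancing you flag as ``the main obstacle'' is resolved in the paper by the explicit scalings $\mu_i=\eps^{1/2}\,(d_0+d_i\eps^\beta)$ and cluster spacing $\eps^\beta$ with $\beta=(N-6)/(2N)$, chosen exactly so that the Weyl--Hessian contribution ($\eps^{2+2\beta}$) and the interaction ($\eps^{(1/2-\beta)(N-2)}$) both appear at order $\eps^{3(N-2)/N}$, one order below the $\eps^2$ term that fixes $d_0$.

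The genuine gap is in your final variational step. You propose to exhibit a \emph{non-degenerate} critical configuration of the limit problem --- ``for instance a symmetric one'' --- and conclude by the implicit function theorem. The limit functional here is
\begin{equation*}
\mathfrak J(d,\tau)=-\tfrac12 A_Nd_0^4\sum_{i=1}^k\mathcal Q(\xi_0)(\tau_i,\tau_i)-E_Nd_0^{N-2}\sum_{i\ne j}|\tau_i-\tau_j|^{-(N-2)}-B_N\sum_{i=1}^k d_i^2,
\end{equation*}
a confined repulsive point-configuration problem in which the confinement $\mathcal Q(\xi_0)$ is the Hessian of $|\Weyl_g|^2_g$ at $\xi_0$: it is positive definite but has no reason to be isotropic, so there is no symmetry forcing a regular $k$-point arrangement to be critical, and even in the isotropic case non-degeneracy of such Fekete-type configurations is a delicate and generally unproved claim for arbitrary $k$. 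Moreover the IFT route would require a $C^1$ expansion of the reduced energy with the same error rates, which is extra work. The paper sidesteps both difficulties: since $\mathfrak J\to-\infty$ when two cluster points collide or when the parameters leave every compact set, $\mathfrak J$ attains a maximum, and a maximum is stable under $C^0$-perturbations; the purely $C^0$ expansion of Proposition \ref{clu-rido} then already produces a critical point of $\widetilde J_\eps$, hence the $k$-peak solution. Replacing your IFT step by this argument closes the proof without having to verify any non-degeneracy.
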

  
Let us point out that Robert and V\'etois in \cite{rv}   built solutions having clustering blow-up points for a special class of perturbed Yamabe type equations 
which look like
 \begin{equation}\label{Eq12}-\mathcal L_g u+\epsilon H u=u^{N+2\over N-2},\ u>0,\ \hbox{in}\ (M,g). \end{equation}
 where  the potential $H$ is chosen
 with $k$ distinct strict local maxima concentrating at a  point $\xi_0$ with  $|\Weyl_g(\xi_0)|_g\not=0.$
 Indeed, these maxima points  generate  solutions with $k$ positive peaks collapsing to $\xi_0$ as $\epsilon$ goes to zero.
 Their result    is related to a suitable choice of the potential $H,$ but actually
 our result shows that the clustering phenomena  is intrinsic in the geometry of the manifold.
 \\
 
 Let us give an example. The warped product   $ \(S^{n} \times S^{m } , g_{S^{n}}\otimes f^2  g_{S^{m }}\),$  is the Riemannian manifold
$ S^{n} \times S^{m } $ equipped with the metric   $g = g_{S^{n}}\otimes f^2  g_{S^{m}}$.
Here $f :  S^{n} \to \R$ is a positive function called warping function.
 It is easy to see that if the warping function $f\equiv1$ than the the product manifold $ \(S^{n} \times S^{m } , g_{S^{n}}\otimes    g_{S^{m }}\) $  has Weyl tensor different from zero at any point. Using similar argoments to the ones used in \cite{pv}, we can prove that for generic   warping functions $f$ close to the constant $1$, the Weyl tensor has a non-degenerate 
 and non-vanishing minimum point.
 \\

The proof of our result  relies on a  finite  dimensional  Ljapunov-Schmidt reduction, whose main steps are described in Section 3
and their proofs are postponed in Section 4. Section 2 is devoted to recall some known results.

\section{Preliminaries}

We   provide the Sobolev space $H^1_g\(M\)$ with the scalar product
\begin{equation}\label{Eq6}
\<u,v\> =\int_M \left<\nabla u,\nabla v\right>_g d\nu_g+\beta_N\int_M\Scal_g uvd\nu_g
\end{equation}
where $d\nu_g$ is the volume element of the manifold. Here $\beta_N:={N-2\over 4(N-1)}.$ We let $\|\cdot\| $ be the norm induced by $\<\cdot,\cdot\> $. Moreover, for any function $u$ in $L^q(M)$, we denote the $L^q$-norm of $u$ by $\|u\|_q=\left(\int_M|u|^qd\nu_g\right)^{1/q}$.

We let $\i^*:L^{\frac{2N}{N+2}}\(M\)\rightarrow H^1_g(M)$ be the adjoint operator of the embedding $\i:H^1_g\(M\)\hookrightarrow L^{{2^*}}\(M\)$, i.e. for any $w$ in $L^{\frac{2N}{N+2}}(M)$, the function $u=\i^*\(w\)$ in $H^1_g\(M\)$ is the unique solution of the equation $ -\varDelta_gu+\beta_N\Scal_g u=w$ in $M$. By the continuity of the embedding of $H^1_g\(M\)$ into $L^{2^*}\(M\)$, we get
\begin{equation}\label{Eq7}
\left\|\i^*\(w\)\right\|\le C\left\|w\right\|_{\frac{2N}{N +2}}
\end{equation}
for some positive constant $C$ independent of $w$.
We rewrite problem \eqref{Eq1} as
\begin{equation}\label{Eq1b}
u=\i^*\[f(u) -\eps u\],\qquad u\in{\rm H}^1_g(M)\end{equation}
where we set
  $f(u):=(u^+)^p $ with  $p={N+2\over N-2} $.

We also define the energy $J_\epsilon:H_g^1(M)\to \mathbb R$
\begin{equation}\label{energy}
J_\epsilon(u):={1\over2}\int\limits_M\(|\nabla_g u|^2+\beta_N\Scal_g u^2+\epsilon u^2\)d\nu_g-{1\over p+1}\int\limits_M \(u^+\)^{p+1}d\nu_g,
\end{equation}
whose critical points are solutions to the problem \eqref{Eq1}.
\\
 
We are going to read the euclidean bubble defined in \eqref{bubble}
on the manifold via a geodesic normal coordinate system around a point $\xi\in M,$ i.e.
$$\mathcal U_{\mu,\xi}(z)= U_{\mu,0} \left( {\rm exp}_{\xi}^{-1}(z)    \right)=\mu^{-{N-2\over2}}U  \left(\frac{{\rm exp}_{\xi}^{-1}(z) }{\mu } \right), \ z\in B_g(\xi,r).$$
It is necessary to write the conformal laplacian in geodesic normal coordinates around the point $\xi.$ In particular, if $x\in B(0,r)$
using standard properties of the exponential map we can write
\begin{equation}\label{lb}
-\Delta_g u = -\Delta u -(g^{ij}-\delta^{ij})\partial^2_{ij}u+g^{ij}\Gamma^k_{ij}\partial_k u, 
\end{equation}
with
\begin{equation}\label{gij}
g^{ij}(x)=\delta^{ij}(x) -\frac 13 R_{iab j}(\xi)x_a x_b +O(|x|^3)\ \hbox{and}\ 
g^{ij}(x)\Gamma^k_{ij}(x)=\partial_l \Gamma^k_{ii}(\xi)x_l +O(|x|^2).
\end{equation}
Here $ R_{iab j}$ denotes the Riemann curvature tensor and $\Gamma^k_{ii}$ the Christoffel's symbols.
 Therefore, if we compare the conformal laplacian   with the euclidean laplacian of the bubble  the error at main order looks like
 $$ \mathcal L_g \mathcal U_{\mu,\xi}   -\Delta \mathcal U_{\mu,\xi} \sim  -\frac 13 \sum\limits_{a,b,i,j=1}^NR_{iab j}(\xi)x_a x_b\partial^2_{ij}U_{\mu,0}+\sum\limits_{i, l,k=1}^N \partial_l \Gamma^k_{ii}(\xi)x_l\partial_k U_{\mu,0}+{\beta_N R_g(\xi)}U_{\mu,0}.$$
For later purposes, it is necessary 
  to kill this main term    by adding to the bubble an higher order term $V$ which is defined as follows.
First, we remind 
  that any  solution of the linear equation (see \cite{be})
\begin{equation}\label{Eqlin}
-\Delta v=pU^{p-1}v\quad\hbox{in}\ \mathbb{R}^N,\end{equation}
is a linear combination of the functions
\begin{equation}\label{Eq14}
\psi^0\(x\)=x\cdot\nabla U(x)+{N-2\over2} U(x)
\ \hbox{and}\
 \psi^i\(x\)=\partial_i U(x), \ i=1,\dots,N.
\end{equation}
Next, we introduce the higher order term $V$ which has been built in Section 2 in \cite{ep}.

 \begin{proposition}\label{bubbleV}
 For any point $\xi\in M,$
 there exist  $\nu(\xi)\in\mathbb R $ and a function $V\in   \mathcal{D}^{1,2}(\mathbb{R}^N) $ solution to
\begin{align}\label{EqV1}
  -\Delta  V-f'(U) V = & -\sum\limits_{a,b,i,j=1}^N{1\over 3} R_{iabj} (\xi)x_ax_b\partial^2_{ij} U
 +\sum\limits_{i, l,k=1}^N \partial_l\Gamma^k_{ii}(\xi)x_l\partial_kU +\beta_N\Scal_g(\xi) U+\nu(\xi)\psi^0 \ \hbox{in}\ \mathbb{R}^N ,
\end{align}
with
$$\displaystyle\int\limits_{\mathbb{R}^N}V(x)\psi^i(x)dx=0,\  i=0,1,\dots,N.$$

Moreover, there exists $C\in\mathbb R$ such that
\begin{equation}\label{V3}
|V(x)|+ |x|\left|\partial_kV(x)\right|+|x|^2 \left|\partial^2_{ij}V (x)\right| \le C{1\over \(1+|x|^2\)^{N-4\over2}},\quad x\in\mathbb{R}^N. \end{equation}
 \end{proposition}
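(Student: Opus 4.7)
The plan is to invoke the Fredholm alternative for the linearized operator $L := -\Delta - f'(U)$ acting on $\mathcal{D}^{1,2}(\mathbb{R}^N)$. By the non-degeneracy statement recalled in \eqref{Eqlin}, $L$ is self-adjoint and its kernel equals $\Span\{\psi^0,\psi^1,\dots,\psi^N\}$. Consequently, for a source $h$ of suitable integrability the equation $LV = h$ admits a solution $V \in \mathcal{D}^{1,2}(\mathbb{R}^N)$, unique once one imposes $V \perp \ker L$, if and only if $\int_{\mathbb{R}^N} h\,\psi^i\,dx = 0$ for every $i = 0,1,\dots,N$.

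Let $h_\xi$ denote the right-hand side of \eqref{EqV1} without the $\nu(\xi)\psi^0$ correction. I would first show that $h_\xi$ has the structure
\begin{equation*}
h_\xi(x) = F_0(|x|) + F_1(|x|)\, A_{ab}(\xi)\, x_a x_b
\end{equation*}
for some radial functions $F_0, F_1$ and some symmetric tensor $A_{ab}(\xi)$ built from $\Ric$ and from $\sum_i\partial_l\Gamma^k_{ii}(\xi)$. For the scalar-curvature term this is immediate. For the second-order curvature term I would write $\partial^2_{ij}U(x) = \alpha(|x|)\delta_{ij} + \beta(|x|)x_ix_j$ using the radial structure of $U$; after substitution the $\delta_{ij}$-piece produces, via the contraction $\sum_i R_{iabi}(\xi) = \Ric_{ab}(\xi)$, a quadratic form of the displayed type, while the quartic piece $\beta(|x|)\sum R_{iabj}x_ax_bx_ix_j$ vanishes identically because the antisymmetry $R_{iabj} = -R_{aibj}$ is contracted against the symmetric tensor $x_ix_a$. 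For the Christoffel term the identity $\partial_kU = U'(|x|)x_k/|x|$ yields a radial factor times the quadratic form $\bigl(\sum_i\partial_l\Gamma^k_{ii}(\xi)\bigr)x_lx_k$. With $h_\xi$ of the claimed form, and since each $\psi^i = \partial_i U$ ($i \geq 1$) is a coordinate $x_i$ times a radial factor, the pointwise product $h_\xi\psi^i$ is a polynomial in $x$ of odd total degree ($1$ or $3$) multiplied by a radial function, hence its integral over $\mathbb{R}^N$ vanishes by the standard $x\mapsto -x$ symmetry argument. This establishes solvability against $\psi^1,\dots,\psi^N$.

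To enforce the remaining orthogonality against $\psi^0$, which is itself even and generically not orthogonal to $h_\xi$, I set
\begin{equation*}
\nu(\xi) := -\frac{1}{\|\psi^0\|_{L^2}^2}\int_{\mathbb{R}^N} h_\xi(x)\,\psi^0(x)\,dx,
\end{equation*}
so that $h_\xi + \nu(\xi)\psi^0$ is orthogonal to the full kernel. The Fredholm alternative then delivers the required solution $V \in \mathcal{D}^{1,2}(\mathbb{R}^N)$, which becomes unique upon subtracting its projection onto $\ker L$, that is, upon imposing the normalizations $\int V\psi^i\,dx = 0$ stated in the proposition.

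For the pointwise estimate \eqref{V3} I would exploit the decay rates $U(x) \sim |x|^{-(N-2)}$, $\partial_k U \sim |x|^{-(N-1)}$, $\partial^2_{ij}U \sim |x|^{-N}$, which imply that each of the summands of $h_\xi$ behaves like $O(|x|^{-(N-2)})$ at infinity once the polynomial factors $x_ax_b$ or $x_l$ are absorbed. Representing $V$ through the Newtonian convolution of $h_\xi + f'(U)V + \nu(\xi)\psi^0$ and observing that $f'(U)(x) = O(|x|^{-4})$ contributes only a subleading perturbation, a Riesz-potential bootstrap yields $|V(x)| \lesssim |x|^{-(N-4)}$; this decay is compatible with $V \in \mathcal{D}^{1,2}(\mathbb{R}^N)$ precisely because $N \geq 7$. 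The derivative bounds follow from interior elliptic estimates applied on dyadic annuli combined with a rescaling argument. The principal technical subtlety of the whole construction is the parity analysis, and in particular the identity $\sum R_{iabj}x_ax_bx_ix_j \equiv 0$: without this cancellation the Fredholm solvability conditions against $\psi^1,\dots,\psi^N$ would already fail, and the single scalar degree of freedom $\nu(\xi)$ would be insufficient to restore them.
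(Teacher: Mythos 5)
Your proposal is essentially the standard construction, and it is the one the paper itself relies on: note that this paper does not prove Proposition \ref{bubbleV} at all, but imports it from Section 2 of \cite{ep}, where the argument is exactly your scheme --- non-degeneracy of $U$ (the kernel of $-\Delta-f'(U)$ on $\mathcal D^{1,2}$ is $\Span\{\psi^0,\dots,\psi^N\}$), Fredholm solvability for right-hand sides in $L^{2N/(N+2)}$, orthogonality to $\psi^1,\dots,\psi^N$ by oddness, the scalar $\nu(\xi)$ chosen to restore orthogonality to $\psi^0$, and decay via the Green representation plus scaled elliptic estimates. Your accounting of where $N\ge 7$ enters (integrability of $h_\xi$ in $L^{2N/(N+2)}$ and compatibility of the decay $|x|^{-(N-4)}$ with $\mathcal D^{1,2}$) is correct and is the real content of the restriction on the dimension.

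One remark in your write-up is wrong, though it does not damage the proof: the identity $\sum_{i,a,b,j}R_{iabj}(\xi)x_ax_bx_ix_j\equiv 0$ (which is correct, by the antisymmetry of $R_{iabj}$ in $i,a$ against the symmetric factor $x_ix_a$) is \emph{not} what makes the solvability conditions against $\psi^1,\dots,\psi^N$ hold. Each summand of $h_\xi$ is an even function of $x$ --- $x_ax_b\,\partial^2_{ij}U$ is even, $x_l\,\partial_kU$ is odd times odd, and $U$ is radial --- while each $\psi^m=\partial_mU$ is odd, so $\int_{\mathbb R^N}h_\xi\,\psi^m\,dx=0$ by parity alone, with or without the quartic cancellation. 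That cancellation is useful later (it simplifies the structure of $V$ and the computation of $\nu(\xi)$ and of the energy expansion where $|\Weyl_g(\xi)|_g^2$ emerges), but the existence statement does not hinge on it. The only genuinely non-parity condition is the one against the even function $\psi^0$, and your introduction of $\nu(\xi)=-\|\psi^0\|_{L^2}^{-2}\int h_\xi\psi^0\,dx$ (legitimate since $\psi^0\in L^2$ for $N\ge5$, and harmless for the odd conditions since $\int\psi^0\psi^m\,dx=0$) handles it exactly as in \cite{ep}.
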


\section{Clustering}

\subsection{The ansatz: the cluster}
Let $r_0$ be a positive real number less than the injectivity radius of $M$  and $\chi$ be a smooth cut-off function  such that $0\le\chi\le1$ in $\mathbb{R}$, $\chi\equiv1$ in $ \[-r_0/2,r_0/2 \]$, and $\chi\equiv0$ out of $ \[-r_0,r_0 \]$. Let also  $\eta$ be a smooth cutoff function  such that $0\le\eta\le1$ in $\mathbb{R}$, $\eta\equiv1$ in $ \[-1,1 \]$, and $\eta\equiv0$ out of $ \[-2,2 \]$.

Let $k\ge1$ be a fixed integer.
Assume that $   \xi_0 \in M $ is a non degenerate minimum point   of   $\xi\to \left|\Weyl_g\(\xi \)\right|^2_g$ with  $\left|\Weyl_g\(\xi_0\)\right|\not=0$, i.e. 
\begin{equation}\label{xio}
 \nabla_g  \left|\Weyl_g\(\xi_0 \)\right|^2_g=0 \ \hbox{and   the quadratic form $\mathcal Q(\xi_0):=D_g^2 \left|\Weyl_g\(\xi_0 \)\right|^2_g $ is positive definite.}
\end{equation} 
 Set
 \begin{equation}\label{do}
 d_0 := \( { B_N\over 2 A_N\left|\Weyl_g\(\xi_0\)\right|^2_g}\)^{1/2}\quad \hbox{($A_N$  and $B_N$ are positive constants defined in   \eqref{costanti})}
\end{equation} 
and let us choose
\begin{equation}\label{taui}
	 \tau_1,\dots,\tau_k\in\mathbb{R}^N\ \hbox{with}\ \tau_i\not=\tau_j\ \hbox{if}\ i\not=j
\end{equation}
and for any $i=1,\dots,k$
 \begin{equation} \label{mui}
	  \mu_i=\eps^\alpha\(d_0+d_i\eps^\beta\),\ \hbox{where}\ d_1,\dots,d_k\in(0,+\infty),\ 
	\alpha:={1\over2},\ \beta:={N-6\over2N}.
\end{equation}
Then, let us define
\begin{align}\label{Wj}
{\mathcal W_i}(z):= & \chi(d_g(z, \xi_0))\mu_i^{-\frac{N-2}{2}}U\left(\frac{{\rm exp}_{\xi_0}^{-1}(z)-\eps^\beta\tau_i}{\mu_i} \right)   \nonumber\\ &+\mu_i^2  \eta\({\left|\exp_{\xi_0}^{-1}\(z\)-\eps^\beta\tau_i\right|\over\mu_i}\) \chi(d_g(z, \xi_0))\mu_i^{-\frac{N-2}{2}}V\left(\frac{{\rm exp}_{\xi_0}^{-1}(z)-\eps^\beta\tau_i}{\mu_i} \right) ,\ z\in M
\end{align}
where the functions $U$ and $V$ are defined, respectively, in \eqref{bubble} and \eqref{EqV1}. 
 Set
$$ \mathcal C :=\{( \tau_1,\dots,\tau_k)\in\mathbb{R}^{kN}\ :\ \tau_i\not=\tau_j\ \hbox{if}\ i\not=j\}.$$

We look for solutions of equation \eqref{Eq1}  or \eqref{Eq1b} of the form
\begin{equation}\label{uk}
	u_\eps  (z)=\sum\limits_{i=1}^k\mathcal{W}_{i} (z )   +\phi_{\eps   }(z),
\end{equation}
where  the remainder term  $\phi_{\eps  }$ belongs to the space $ \mathcal K^\perp$ defined as follows.
 For any  $i=1,\dots,k$  we introduce the functions
\begin{equation}\label{Eq16}
	Z_ {j,i}\(z\)=\chi \(d_g (z,\xi_0 ) \)\mu_i^{-\frac{N-2}{2}}\psi^j\left(\frac{{\rm exp}_{\xi_0}^{-1}(z)-\eps^\beta\tau_i}{\mu_i} \right),\ j=0,1,\dots,N,
\end{equation}
where the functions $\psi^j  $ are defined in \eqref{Eq14}. We define the subspaces
$$\mathcal  K :=\Span\left\{\i^*\(Z_{j,i}\), j=0,1,\dots,N,\ i=1,\dots,k\right\}$$ and $$ \mathcal K^\perp :=\left\{\phi\in H^1_g\(M\):\ \left\langle\phi,\i^*\(Z_{j,i} \)\right\rangle    =0,\quad j=0,\dotsc,N,\ i=1,\dots,k\right\} $$
and we also define the projections $\Pi $ and $\Pi^\perp $ of   $H^1_g\(M\)$ onto  $\mathcal  K$
and
$\mathcal  K^\perp$, respectively.

Therefore, equation \ref{Eq1b} turns out to be  equivalent to the system
\begin{align}\label{sk1}
	& \Pi^\perp  \{u_\eps  -\i^*\[f(u_\eps  )- \eps u_\eps  \]\}=0,\\
	& \Pi \{u_\eps  -\i^*\[f(u_\eps  )-\eps u_\eps  \]\}=0.
	\label{sk2}
\end{align}
where $u_\eps  $ is given in \eqref{uk}.
\subsection{The remainder term: solving the equation \eqref{sk1}}
In order to find the remainder term $\phi_{\eps  }$ we rewrite \eqref{sk1} as
$$\mathcal E +\mathcal L (\phi_{  \eps})+\mathcal N (\phi_ \eps)=0,$$
where the error term  $\mathcal E $ is  defined by
\begin{equation}\label{errcl}
\mathcal E:=\Pi^\bot \left\{\sum_{i=1}^k \mathcal W_i -\i^*\left[f\left(\sum_{i=1}^k \mathcal W_i  \right)-\eps \sum_{i=1}^k \mathcal W_i  \right]\right\}
\end{equation}
the linear operator  $\mathcal L$ is defined by
\begin{equation}\label{lincl}
\mathcal L  (\phi_{ \eps}):=\Pi^\bot \left\{\phi_\eps-\i^*\left[f'\left(\sum_{i=1}^k \mathcal W_i  \right)\phi_\eps-\eps \phi_\eps\right]\right\}
\end{equation}
and the higher order term  $\mathcal N $ is defined by
\begin{equation}\label{enne}
\mathcal N :=\Pi^\bot \left\{-\i^*\left[f\left(\sum_{i=1}^k \mathcal W_i  +\phi_\eps\right)-f\left(\sum_{i=1}^k \mathcal W_i \right)-f'\left(\sum_{i=1}^k \mathcal W_i  \right)\phi_{  \eps}\right]\right\}.
\end{equation}

In order to solve equation \eqref{sk1}, first of all we need to evaluate the $H^1_g(M)-$ norm of the error term  $\mathcal E  $.
This is done in the following lemma whose proof is postponed in Section \ref{appB}.\\

\begin{lemma}\label{errorecluster}
  For any compact subset  $A\subset  (0, +\infty)^k \times\mathcal C$ there exists a positive constant $C $ and $\eps_0>0$ such that 
for any $(d_1,\dots,d_k,\tau_1,\dots,\tau_k)\in A$ and for any $\eps\in(0,\eps_0)$ it holds
\begin{equation}\label{stimaerrcl}
 \|\mathcal E \|\leq C  \left\{\begin{aligned} &\eps^{\frac 54} \qquad &\mbox{if}\,\, N=7\\ &\eps^{3\over2}|\ln\eps|^{\frac 5 8} \qquad &\mbox{if}\,\, N=8\\ &\eps^{3\over 2} \qquad &\mbox{if}\,\, N\ge 9.\end{aligned}\right.\end{equation}
\end{lemma}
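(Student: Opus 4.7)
The plan is to reduce the $H^1_g$-norm estimate on $\mathcal E$ to an $L^{\frac{2N}{N+2}}$-norm estimate on a pointwise residual. Applying \eqref{Eq7} together with the non-expansiveness of the orthogonal projection $\Pi^\perp$ on $H^1_g(M)$, one has
$$
\|\mathcal E\| \;\le\; C\,\|R_\eps\|_{\frac{2N}{N+2}},
$$
where $R_\eps$ is the pointwise residual of equation \eqref{Eq1} evaluated at $u = \sum_i \mathcal W_i$, namely $R_\eps := -\mathcal L_g\bigl(\sum_i \mathcal W_i\bigr)+\eps\sum_i\mathcal W_i-f\bigl(\sum_i \mathcal W_i\bigr)$. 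The entire argument then reduces to expanding and integrating $R_\eps$ in suitable coordinates.

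I would decompose $M = \bigcup_i \Omega_i \cup \Omega_{\mathrm{out}}$, where $\Omega_i := B_g(\xi_i, r)$ with $\xi_i := \exp_{\xi_0}(\eps^\beta \tau_i)$ and $r$ is smaller than half of the minimum separation $\sim \eps^\beta$ between the centres. On $\Omega_{\mathrm{out}}$ every $\mathcal W_i$ decays like $\mu_i^{(N-2)/2}/d_g^{N-2}$ and contributes only a higher-order term to $\|R_\eps\|_{\frac{2N}{N+2}}$. On each $\Omega_i$, only $\mathcal W_i$ is of leading size, while $\mathcal W_\ell$ ($\ell\ne i$) is of order $(\mu_\ell/\eps^\beta)^{N-2}=\eps^{3(N-2)/N}$. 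Hence the bubble-interaction error $f(\sum_\ell \mathcal W_\ell)-\sum_\ell f(\mathcal W_\ell)$ is absorbed into the claimed bound for every $N\ge 7$, and it suffices to estimate the diagonal residual $-\mathcal L_g \mathcal W_i + \eps \mathcal W_i - f(\mathcal W_i)$ on $\Omega_i$.

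On $\Omega_i$ I would pass to geodesic normal coordinates centred at $\xi_i$, plug the expansions \eqref{lb}--\eqref{gij} into $-\mathcal L_g \mathcal W_i$, and Taylor-expand $f(\mathcal W_i)=f(U_{\mu_i,0}+\mu_i^2 V)$. The function $V$ provided by Proposition \ref{bubbleV} at base point $\xi_i$ is designed so that all $O(\mu_i^2)$ contributions on the right-hand side of \eqref{EqV1} cancel exactly, while the extra $\nu(\xi_i)\psi^0$ term belongs to $\mathcal K$ and is killed by $\Pi^\perp$. What remains splits into:
\begin{enumerate}
\item[(a)] the linear perturbation $\eps\,\mathcal W_i$, with $\|\eps\mathcal W_i\|_{\frac{2N}{N+2}}\sim \eps\,\mu_i^{2}\sim \eps^{2}$;
\item[(b)] higher-order geometric pieces coming from the $O(|x|^3)$ and $O(|x|^4)$ terms in \eqref{gij}, schematically $\mu_i^{3}\,|y|^3\,\partial U(y)\,\mu_i^{-(N-2)/2}$ plus a $\mu_i^{4}\,|y|^4\,\partial^2 U(y)$ piece;
\item[(c)] the quadratic Taylor remainder of the form $\mu_i^{4}\,U^{p-2} V^2$;
\item[(d)] cut-off errors, which are polynomially negligible since $\chi$ and $\eta$ are $\equiv 1$ on scales much larger than $\mu_i$.
\end{enumerate}
After the rescaling $x=\mu_i y$, each piece yields a factor $\mu_i^{\alpha}=\eps^{\alpha/2}$ multiplied by a fixed integral over $\mathbb R^N$. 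Using \eqref{V3} and the bound $|\nabla^j U(y)|\lesssim \langle y\rangle^{-(N-2)-j}$, these integrals converge precisely when $N\ge 9$, and one obtains the clean $\eps^{3/2}$ estimate. For $N=8$ the borderline integral diverges logarithmically and must be truncated at scale $\mu_i^{-1}r_0$, producing the factor $|\ln\eps|^{5/8}$; for $N=7$ the same integral diverges polynomially and the truncation yields only $\eps^{5/4}$.

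The main obstacle will be the bookkeeping in steps (b)--(c): one has to verify that the $V$ of Proposition \ref{bubbleV}, built at the shifted base point $\xi_i$ rather than at $\xi_0$, cancels simultaneously the Riemann, Christoffel and scalar-curvature contributions, and that the pointwise estimate \eqref{V3} suffices to make the residuals integrable at infinity in the high-dimensional regime. The dimension split is driven by a careful case analysis of the remaining integrals; $N=7$ is the most delicate case, because there is no logarithmic margin and the truncation at $|y|\sim \mu_i^{-1}r_0$ has to be inserted at every step of the estimate.
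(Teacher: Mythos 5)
Your proposal follows essentially the same route as the paper: reduce $\|\mathcal E\|$ to an $L^{\frac{2N}{N+2}}$ bound on the residual via \eqref{Eq7}, split into the single-bubble residuals (which carry the dimension-dependent rates $\eps^{5/4}$, $\eps^{3/2}|\ln\eps|^{5/8}$, $\eps^{3/2}$ and which the paper simply imports from Lemma~3.1 of \cite{ep} rather than re-deriving as you sketch) plus the interaction term $f\bigl(\sum_i\mathcal W_i\bigr)-\sum_i f(\mathcal W_i)$, estimated on balls of radius $\sim\eps^{\beta}$ around each concentration point. One caveat: your pointwise claim that $\mathcal W_\ell$ ($\ell\neq i$) is of order $\eps^{3(N-2)/N}$ on $\Omega_i$ is not correct as stated (its actual size near $\partial\Omega_i$ is $\mu_\ell^{(N-2)/2}\eps^{-\beta(N-2)}=\eps^{(N-2)(12-N)/(4N)}$, which is not small for $N\ge 12$, so $\mathcal W_i$ does not dominate uniformly on $\Omega_i$); the conclusion survives because, as in the paper, one bounds the cross terms $W_h^{p-1}W_i$ and $W_i^{p}W_j$ directly in $L^{\frac{2N}{N+2}}$, obtaining $O\bigl(\eps^{3\frac{N+2}{2N}}\bigr)$, which is negligible for every $N\ge 7$.
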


Next, we need to understand the invertibility of the linear operators $\mathcal L   $.
This is done in the following lemma whose proof can be carried out as in   \cite{rv2}.\\
\begin{lemma}\label{invcluster}
For  any compact subset $A \subset (0,+\infty)^k\times \mathcal C $ there exists a positive constant $C $  and $\eps_0>0$ such that   for any $(d_1,\dots,d_k,\tau_1,\dots,\tau_k)\in A$ and for any $\eps\in(0,\eps_0)$ it holds
\begin{equation}
\|\mathcal L  (\phi )\|\geq C  \|\phi \|\ \hbox{for any $\phi \in \mathcal K ^\bot$}.\end{equation}
\end{lemma}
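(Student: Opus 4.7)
The plan is a contradiction argument in the spirit of Lyapunov--Schmidt. Suppose the claim fails: there exist sequences $\eps_n\to 0$, parameters $(d_i^n,\tau_i^n)\in A$, and $\phi_n\in\mathcal K^\perp$ with $\|\phi_n\|=1$ and $h_n:=\mathcal L(\phi_n)\to 0$ in $H^1_g(M)$. Unfolding the projection $\Pi^\perp$ in the definition \eqref{lincl} of $\mathcal L$, the identity $\mathcal L(\phi_n)=h_n$ reads
\begin{equation*}
\phi_n-\i^*\Bigl[f'\Bigl(\sum_{i=1}^k\mathcal W_i\Bigr)\phi_n-\eps_n\phi_n\Bigr]=h_n+\sum_{i,j}c_{ij}^n\,\i^*(Z_{j,i})
\end{equation*}
for some Lagrange multipliers $c_{ij}^n$. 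Testing against each $\i^*(Z_{j,i})$ and using the orthogonality $\langle\phi_n,\i^*(Z_{j,i})\rangle=0$ together with the asymptotically block-diagonal, uniformly invertible Gram matrix of the $Z_{j,i}$'s (the blocks are separated because the peaks are), one deduces $c_{ij}^n\to 0$.

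For each peak $i\in\{1,\dots,k\}$ I would rescale by setting $\tilde\phi_{i,n}(x):=\mu_i^{(N-2)/2}\phi_n(\exp_{\xi_0}(\eps_n^\beta\tau_i^n+\mu_i x))$. The conformal behaviour of $\i^*$ and the normal-coordinate expansions \eqref{lb}--\eqref{gij} give a bound for $\tilde\phi_{i,n}$ in $\mathcal D^{1,2}(\mathbb R^N)$, so up to a subsequence $\tilde\phi_{i,n}\rightharpoonup\tilde\phi_i$. The crucial separation input is $\alpha=\tfrac12>\beta=\tfrac{N-6}{2N}$, which yields $\mu_i/(\eps_n^\beta|\tau_i^n-\tau_\ell^n|)\to 0$ for $\ell\ne i$, so the other peaks $\mathcal W_\ell$ contribute negligibly to the linearized equation when viewed at the scale of the $i$-th peak. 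Passing to the limit (and using the decay \eqref{V3} to make the $V$-correction in $\mathcal W_i$ disappear, along with $\eps_n\to 0$ and $\mu_i^2\Scal_g\to 0$) one obtains
\begin{equation*}
-\Delta\tilde\phi_i=p\,U^{p-1}\tilde\phi_i\quad\text{in }\mathbb R^N,
\end{equation*}
while the orthogonality relations pass to the limit as $\int_{\mathbb R^N}\tilde\phi_i\,\psi^j\,dx=0$ for $j=0,\dots,N$. The non-degeneracy result recalled at \eqref{Eqlin} then forces $\tilde\phi_i\equiv 0$ for every $i$.

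To conclude, I would upgrade these localized vanishings to $\|\phi_n\|\to 0$. Taking the scalar product of the unfolded equation with $\phi_n$, the contributions $\langle\phi_n,h_n\rangle$ and $\sum c_{ij}^n\langle\phi_n,\i^*(Z_{j,i})\rangle$ vanish, giving
\begin{equation*}
\|\phi_n\|^2=\int_M f'\Bigl(\sum_i\mathcal W_i\Bigr)\phi_n^2\,d\nu_g-\eps_n\int_M\phi_n^2\,d\nu_g+o(1).
\end{equation*}
Splitting $M$ into the balls $B(\exp_{\xi_0}(\eps_n^\beta\tau_i^n),R\mu_i)$ and their complement, the rescaling in each ball turns the first integral into $p\int_{B(0,R)}U^{p-1}\tilde\phi_{i,n}^2\,dx+o(1)$, which vanishes by strong $L^{p+1}_{\mathrm{loc}}$-convergence to $\tilde\phi_i=0$; outside the balls the pointwise decay $|\mathcal W_i|^{p-1}=O(\mu_i^2|x-\eps_n^\beta\tau_i^n|^{-4})$ together with H\"older and the Sobolev embedding bounds the remainder by $O(R^{-\gamma})$ uniformly in $n$, for some $\gamma>0$. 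Sending first $n\to\infty$ and then $R\to\infty$ contradicts $\|\phi_n\|=1$.

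The main technical obstacle is this cluster-specific concentration analysis: one must track simultaneously the $k$ mutually collapsing peaks, the non-flat metric corrections \eqref{gij}, the $V$-correction, and the $\eps_n$-term, checking that each passes harmlessly to the limit. The scaling choices $\alpha=\tfrac12$ and $\beta=\tfrac{N-6}{2N}$ play the double role that $\beta<\alpha$ decouples the peaks in the rescaled limit while $\beta>0$ (which forces $N\ge 7$) still ensures that all peaks coalesce at $\xi_0$; this is exactly the regime in which the quoted argument of Robert--V\'etois \cite{rv2} applies.
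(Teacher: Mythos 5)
Your argument is correct and is essentially the intended one: the paper gives no proof of this lemma, deferring to Robert--V\'etois \cite{rv2}, whose general invertibility theorem is proved by exactly this contradiction--rescaling--nondegeneracy scheme. You also correctly isolate the only cluster-specific point, namely that $0<\beta<\alpha$ makes the peaks mutually negligible at their own scale, so the single-bubble analysis applies to each of the $k$ profiles.
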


   Finally, we are able to solve equation \eqref{sk1}. This is done in the following proposition,  whose proof is postponed in Section \ref{appB} and relies on  a   standard contraction mapping argument.\\

\begin{proposition}\label{esistenzacluster}
 For any compact subset $A \subset  (0,+\infty)^{k}\times\mathcal C $ there exists a positive constant $C $ and $\eps_0$ such that for $\eps\in(0,\eps_0)$ and for any $(d_1,\dots,d_k, \tau_1,\dots,\tau_k )\in A $    there exists a unique function $\phi_{ \eps}\in \mathcal K^\bot $  which solves  equation  \eqref{sk1} such that
\begin{equation} 
 \|\phi_\eps\|\leq C   \eps^{{3(N-2)\over 2N }+\zeta} \end{equation}
 for some $\zeta>0.$
Moreover,   the map $(d_1,\dots,d_\ell, \tau_1,\dots,\tau_k )\to \phi_{\ell, \eps}(d_1,\dots,d_\ell,\tau_1,\dots,\tau_k ) $ is of class $C^1$ and 
$$\| \nabla _{(d_1,\dots,d_\ell,\tau_1,\dots,\tau_k ) } \phi_{ \eps}\|\leq C \eps^{{3(N-2)\over 2N }+\zeta} $$
for some   positive constants $C$ and $\zeta.$
 \end{proposition}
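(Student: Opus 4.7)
My approach is the standard Banach contraction. By Lemma \ref{invcluster}, $\mathcal L:\mathcal K^\perp\to\mathcal K^\perp$ is uniformly invertible on the compact set $A$, so equation \eqref{sk1}, rewritten as $\mathcal E+\mathcal L(\phi)+\mathcal N(\phi)=0$, is equivalent to the fixed point equation $\phi=\mathcal T(\phi)$ with
$$\mathcal T(\phi):=-\mathcal L^{-1}\bigl(\mathcal E+\mathcal N(\phi)\bigr).$$
I fix the target radius $R_\eps:=\eps^{3(N-2)/(2N)+\zeta}$ with $\zeta>0$ small (depending on $N\ge 7$) and look for a fixed point in $\overline{B_{R_\eps}}\cap\mathcal K^\perp$.

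The key new estimate is on $\mathcal N$. For $N\ge 7$ the exponent $p=\tfrac{N+2}{N-2}\in(1,2)$, so the elementary pointwise inequality $|f(a+b)-f(a)-f'(a)b|\le C|b|^p$ holds. Combining \eqref{Eq7} with the Sobolev embedding $H^1_g(M)\hookrightarrow L^{2^*}(M)$ and the identity $\tfrac{2N}{N+2}\,p=2^*$ one obtains
$$\|\mathcal N(\phi)\|\le C\|\phi\|^p,\qquad \|\mathcal N(\phi_1)-\mathcal N(\phi_2)\|\le C\bigl(\|\phi_1\|+\|\phi_2\|\bigr)^{p-1}\|\phi_1-\phi_2\|.$$
On $\overline{B_{R_\eps}}\cap\mathcal K^\perp$ this gives $\|\mathcal T(\phi)\|\le C(\|\mathcal E\|+R_\eps^p)$ and Lipschitz constant $CR_\eps^{p-1}=o(1)$. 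The self-mapping condition reduces to $\|\mathcal E\|\le R_\eps/(2C)$, which by Lemma \ref{errorecluster} forces $\tfrac{3(N-2)}{2N}+\zeta\le\tfrac{5}{4}$ when $N=7$ and $\tfrac{3(N-2)}{2N}+\zeta<\tfrac{3}{2}$ when $N\ge 8$ (the $|\ln\eps|^{5/8}$ factor at $N=8$ being absorbed by the strict inequality). Since $\tfrac{3(N-2)}{2N}=\tfrac{3}{2}-\tfrac{3}{N}$, a strictly positive $\zeta$ always exists, and Banach's theorem delivers the unique $\phi_\eps$ with the stated norm bound.

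For the $C^1$ dependence on the cluster parameters $(d,\tau):=(d_1,\dots,d_k,\tau_1,\dots,\tau_k)$ I would apply the implicit function theorem to $F(\phi,d,\tau,\eps):=\phi+\mathcal L^{-1}\bigl(\mathcal E+\mathcal N(\phi)\bigr)$. At the solution $D_\phi F=\mathrm{Id}+\mathcal L^{-1}\circ\mathcal N'(\phi_\eps)$ is invertible since $\|\mathcal N'(\phi_\eps)\|\lesssim\|\phi_\eps\|^{p-1}=o(1)$. Differentiating the fixed point identity,
$$\nabla_{(d,\tau)}\phi_\eps=-(D_\phi F)^{-1}\Bigl[\mathcal L^{-1}\,\nabla_{(d,\tau)}\mathcal E+\bigl(\nabla_{(d,\tau)}\mathcal L^{-1}\bigr)\bigl(\mathcal E+\mathcal N(\phi_\eps)\bigr)\Bigr],$$
so the desired bound $\|\nabla_{(d,\tau)}\phi_\eps\|\lesssim R_\eps$ follows once one verifies $\|\nabla_{(d,\tau)}\mathcal E\|\lesssim R_\eps$. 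This is done by redoing the pointwise expansions from Lemma \ref{errorecluster} with one $\partial_{d_i}$ or $\partial_{\tau_i}$ falling on each bubble factor, using $\partial_{d_i}\mu_i=O(\eps^{\alpha+\beta})$ and $\partial_{\tau_i}$ costing $\eps^\beta\mu_i^{-1}$, so that no extra powers of $\eps$ are lost.

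The main obstacle is not the contraction step, which is routine, but the parameter-derivative estimate on $\mathcal E$ needed for the $C^1$ conclusion: it cannot be extracted as a black box from Lemma \ref{errorecluster} and must be produced by repeating its proof term-by-term after differentiation, while simultaneously verifying that the mutual distances $|\tau_i-\tau_j|$ on the cluster set $\mathcal C$ keep all cross-interaction terms of manageable size.
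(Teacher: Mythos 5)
Your proposal is exactly the argument the paper intends: the authors state that the proof ``relies on a standard contraction mapping argument'' via Lemma \ref{errorecluster} and Lemma \ref{invcluster}, and in fact Section \ref{appB} never writes it out, so your fixed-point scheme $\phi=-\mathcal L^{-1}(\mathcal E+\mathcal N(\phi))$ on the ball of radius $\eps^{3(N-2)/(2N)+\zeta}$, with the exponent check $\tfrac{3(N-2)}{2N}=\tfrac32-\tfrac3N<\tfrac54$ for $N=7$ and $<\tfrac32$ for $N\ge8$, is the intended route and is correct. The one soft spot, which you yourself flag, is the $C^1$ bound: differentiating a bubble in $\tau_i$ costs a factor $\eps^{\beta}\mu_i^{-1}\sim\eps^{-3/N}$, which exactly consumes the margin $\zeta\le 3/N$ available for $N\ge9$, so the claim that ``no extra powers of $\eps$ are lost'' needs genuine justification (e.g.\ exploiting cancellation in the leading part of $\mathcal E$, or accepting a smaller $\zeta$ in the derivative estimate) rather than naive term-by-term differentiation; the paper offers no guidance here either.
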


\subsection{The reduced problem: proof of Theorem \ref{cluster}}
Let us introduce the reduced energy, defined by
\begin{equation}\label{red-en-cl}
\widetilde{J}_\eps(d_1, \ldots, d_k,\tau_1, \ldots, \tau_k):=J_\eps\(\sum\limits_{i=1}^k\mathcal W_{i}+\phi_{ \eps}\),\ (d_1, \ldots, d_k,\tau_1, \ldots, \tau_k) \in (0, +\infty)^k \times (\mathbb R^N)^k
\end{equation} where the remainder term   $\phi_{  \eps}$ is defined in Proposition \ref{esistenzacluster}.

The following result allows as usual to reduce our problem to a finite dimensional one. The proof is standard and it is postponed in Section \ref{appB}.\\

\begin{proposition}\label{clu-rido}
\begin{itemize}
\item[(i)] $\sum\limits_{i=1}^k\mathcal W_{i}+\phi_{ \eps}$ is a solution to \eqref{Eq1} if and only if $(d_1, \ldots, d_k,\tau_1, \ldots, \tau_k ) \in (0, +\infty)^k \times (\mathbb R^N)^k$  is a critical point of the reduced energy \eqref{red-en-cl}
\item[(ii)] The following expansion holds true
\begin{equation}\begin{aligned}
& \widetilde{J}_\eps(d_1, \ldots, d_k,\tau_1, \ldots, \tau_k)  :=k D_N+c(\xi_0)\eps^2+\eps^{3{N-2\over N}}
\mathfrak J (d_1, \ldots, d_k,\tau_1, \ldots, \tau_k)+o\(\eps^{3{N-2\over N}}\)
\end{aligned}\end{equation}
as $\eps\to 0$, $C^0-$ uniformly with respect to $(d_1, \ldots, d_k,\tau_1, \ldots, \tau_k)$ in compact subsets of $(0, +\infty)^k\times \mathcal C.$
Here $c(\xi_0):=k\left[-A_N |{\rm Weyl}_g(\xi_0)|^2_g d_0^4+B_N d_0^2\right],$
 $A_N,$ $B_N$, $D_N$ and $E_N $ are positive constants defined in \eqref{costanti} and
\begin{equation}\label{fraj} \mathfrak J (d_1, \ldots, d_k,\tau_1, \ldots, \tau_k):=-{1\over2}A_N d_0^4\sum\limits_{i =1}^k\mathcal Q(\xi_0)(\tau_i,\tau_i) -E_Nd_0^{N-2}\sum\limits_{i,j=1\atop i\not=j}^k{1\over |\tau_i-\tau_j|^{N-2}}-B_N \sum\limits_{i=1}^kd_i^2 .\end{equation}

\end{itemize}
\end{proposition}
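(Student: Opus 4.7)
For part (i), I would use the classical Ljapunov--Schmidt variational reduction. Since $\phi_\eps\in\mathcal K^\perp$ already solves \eqref{sk1}, the function $u_\eps=\sum_{i=1}^k\mathcal W_i+\phi_\eps$ is a genuine critical point of $J_\eps$ if and only if the projection $\Pi\{u_\eps-\i^*[f(u_\eps)-\eps u_\eps]\}$ vanishes, i.e. if and only if the Lagrange multipliers $c_{j,i}$ in the expansion $u_\eps-\i^*[f(u_\eps)-\eps u_\eps]=\sum_{i,j}c_{j,i}\i^*(Z_{j,i})$ all vanish. On the other hand, differentiating the reduced energy $\widetilde J_\eps$ with respect to the parameters $d_i,\tau_i$ and using the equation \eqref{sk1} for $\phi_\eps$ together with the $C^1$ dependence from Proposition \ref{esistenzacluster}, one shows that $\partial_{d_i}\widetilde J_\eps$ and $\partial_{\tau_i}\widetilde J_\eps$ are linear combinations of the $c_{j,i}$ with an almost-diagonal matrix that is invertible for $\eps$ small. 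Hence $\nabla\widetilde J_\eps=0$ if and only if all $c_{j,i}=0$. This is the standard argument so I would not redo the algebra here.

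For part (ii), the strategy is to write
\begin{equation*}
\widetilde J_\eps=J_\eps\!\Big(\sum_{i=1}^k\mathcal W_i\Big)+\Big[J_\eps\!\Big(\sum_{i=1}^k\mathcal W_i+\phi_\eps\Big)-J_\eps\!\Big(\sum_{i=1}^k\mathcal W_i\Big)\Big]
\end{equation*}
and to show that the bracket is negligible. A Taylor expansion of $J_\eps$ at $\sum\mathcal W_i$ in direction $\phi_\eps$ produces a first order term $\langle \mathcal E,\phi_\eps\rangle$ and a quadratic remainder $O(\|\phi_\eps\|^2+\|\phi_\eps\|^{p+1})$. Lemma \ref{errorecluster} and Proposition \ref{esistenzacluster} give $\|\mathcal E\|\,\|\phi_\eps\|\le C\eps^{3/2}\cdot\eps^{3(N-2)/(2N)+\zeta}=o(\eps^{3(N-2)/N})$ and similarly $\|\phi_\eps\|^2=o(\eps^{3(N-2)/N})$, which is exactly what we need.

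The real work is the expansion of $J_\eps(\sum\mathcal W_i)$. I would split it as
\begin{equation*}
J_\eps\!\Big(\sum_{i=1}^k\mathcal W_i\Big)=\sum_{i=1}^k J_\eps(\mathcal W_i)+\mathcal I_\eps,
\end{equation*}
where $\mathcal I_\eps$ collects the interaction terms between distinct peaks. For each single--peak energy I would apply the single--bubble expansion of Esposito--Pistoia--Vetois/\cite{ep} (which motivates the introduction of the correction $V$), obtaining
\begin{equation*}
J_\eps(\mathcal W_i)=D_N-A_N|\Weyl_g(\xi_i)|_g^2\mu_i^4+B_N\eps\mu_i^2+o(\mu_i^4+\eps\mu_i^2),
\end{equation*}
where $\xi_i=\exp_{\xi_0}(\eps^\beta\tau_i)$. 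Substituting $\mu_i=\eps^{1/2}(d_0+d_i\eps^\beta)$ and Taylor expanding $|\Weyl_g(\xi_i)|_g^2=|\Weyl_g(\xi_0)|_g^2+\tfrac12\eps^{2\beta}\mathcal Q(\xi_0)(\tau_i,\tau_i)+o(\eps^{2\beta})$ (the first--order piece vanishes by \eqref{xio}), the $\eps^2$--coefficient combines to give $-A_N|\Weyl_g(\xi_0)|_g^2d_0^4+B_N d_0^2$, the $\eps^{2+\beta}$--coefficient vanishes identically thanks to the definition \eqref{do} of $d_0$, and the $\eps^{2+2\beta}=\eps^{3(N-2)/N}$--coefficient produces the $d_i^2$ and $\mathcal Q(\xi_0)(\tau_i,\tau_i)$ contributions to $\mathfrak J$. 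For the interaction term $\mathcal I_\eps$ the standard computation $\int_M\mathcal W_i^p\mathcal W_j\,d\nu_g\sim\alpha_N^{p+1}\mu_i^{(N-2)/2}\mu_j^{(N-2)/2}/|\eps^\beta(\tau_i-\tau_j)|^{N-2}$ immediately yields the $1/|\tau_i-\tau_j|^{N-2}$ piece at order $\eps^{(N-2)(1/2-\beta)}=\eps^{3(N-2)/N}$ with coefficient $E_N d_0^{N-2}$.

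The main obstacle is the careful bookkeeping of the many error terms at the intermediate orders $\eps^2$ and $\eps^{2+\beta}$: one needs to show that no unwanted surviving term at order $\eps^{3(N-2)/N}$ is missed. This is where the role of the corrector $V$ (Proposition \ref{bubbleV}) becomes essential, since it cancels precisely those contributions of the geometric error $\mathcal L_g\mathcal U_{\mu,\xi}-\Delta\mathcal U_{\mu,\xi}$ that would otherwise spoil the expansion in dimensions $N\ge 7$; and it is also where the specific choice of the exponents $\alpha=1/2$, $\beta=(N-6)/(2N)$ in \eqref{mui} pays off, by equating the scales of the Weyl correction, the linear perturbation, and the bubble--bubble interaction. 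The $C^0$--uniformity on compact sets is then routine since all implicit constants only depend on the compact $A$ through the distances $|\tau_i-\tau_j|$ and the range of $d_i$.
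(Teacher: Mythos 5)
Your proposal follows essentially the same route as the paper: the standard Lagrange-multiplier argument for (i), and for (ii) the splitting $\widetilde J_\eps=J_\eps(\sum_i\mathcal W_i)+O(\|\mathcal E\|\,\|\phi_\eps\|+\|\phi_\eps\|^2)$, the single-bubble expansion $J_\eps(\mathcal W_i)=D_N-A_N|\Weyl_g(\xi_i)|_g^2\mu_i^4+\eps B_N\mu_i^2+\dots$ from \cite{ep}, the Taylor expansion of $|\Weyl_g(\xi_i)|_g^2$ with the cancellation of the $\eps^{2+\beta}$-term via the choice of $d_0$, and the interaction integral $\int_M f(\mathcal W_i)\mathcal W_j\,d\nu_g$ producing the $E_Nd_0^{N-2}|\tau_i-\tau_j|^{-(N-2)}$ term at order $\eps^{3(N-2)/N}$. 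The only caveat is that for $N=7,8$ one must use the weaker bounds $\|\mathcal E\|=O(\eps^{5/4})$, $O(\eps^{3/2}|\ln\eps|^{5/8})$ rather than $\eps^{3/2}$, but the product $\|\mathcal E\|\,\|\phi_\eps\|$ is still $o\(\eps^{3(N-2)/N}\)$, so the argument closes as you describe.
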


\begin{proof}[Proof of Theorem \ref{cluster}]
By (i) of Proposition \eqref{clu-rido},  it is sufficient to find a critical point of the reduced energy $\widetilde{J}_\eps$.  
   Now, the function $\mathfrak J$  defined in \eqref{fraj}, 
 has  a maximum point    which is stable under $C^0-$perturbations. Therefore,  by (ii) of Proposition \eqref{clu-rido}, we deduce that if $\eps$ is small enough there exists 
 $({d_1}_\eps, \ldots, {d_k}_\eps,{\tau_1}_\eps, \ldots, {\tau_k}_\eps)$ critical point of $\widetilde{J}_\eps.$ 
	That concludes the proof.

\end{proof}

\section {Appendix}\label{appB}
For any $i=1,\dots, k,$ we set
$${  W_i}(x):=    \mu_i^{-\frac{N-2}{2}}U\left(\frac{x-\eps^\beta\tau_i}{\mu_i} \right)   +  \eta\({\left|x-\eps^\beta\tau_i\right|\over\mu_i}\) \chi(d_g(z, \xi_0))\mu_i^{-\frac{N-6}{2}}V\left(\frac{x-\eps^\beta\tau_i}{\mu_i} \right) ,\ x\in \mathbb{R}^N.
$$
It is important to point out that there exists $c>0$ such that
\begin{equation}\label{wi1}
	\left|W_i(x)\right|\le c{\mu_i^{N-2\over2}\over |x -\eps^\beta\tau_i|^{N-2}}\ \forall\ x\in\mathbb{R}^N.
\end{equation}

\subsection{Proof of Lemma  \ref{errorecluster}}
 
It is easy to see that,  ($\nu(\xi) $ is defined in \eqref{EqV1})
\begin{equation*}\begin{aligned}\|\mathcal E\|&\le c\sum_{i=1}^k \left| -\Delta_g \mathcal W_i +(\beta_N \Scal_g+\eps) \mathcal W_i -\nu(\xi) Z_{0, i}-f(\mathcal W_i)\right|_{\frac{2N}{N+2}}+c\left|f\left(\sum_{i=1}^k \mathcal W_i\right)-\sum_{i=1}^k f(\mathcal W_i)\right|_{\frac{2N}{N+2}}\end{aligned}\end{equation*}
Arguing exactly as    in Lemma 3.1 of \cite{ep}, we can estimate each term
$$\left| -\Delta_g \mathcal W_i +(\beta_N \Scal_g+\eps) \mathcal W_i -\nu(\xi) Z_{0, i}-f(\mathcal W_i)\right|_{\frac{2N}{N+2}}=\left\{\begin{aligned}&O\(\eps^{\frac54}\)\ \hbox{if}\ N=7,\\ &O\(\eps^{\frac32}|\ln\eps|^{\frac58}\)\ \hbox{if}\ N=8,\\ &O\(\eps^{\frac32}\)\ \hbox{if}\ N\ge9.\\\end{aligned}\right.$$

 Next, we show that
 $$\left|f\left(\sum_{i=1}^k \mathcal W_i\right)-\sum_{i=1}^k f(\mathcal W_i)\right|_{\frac{2N}{N+2}}=O\(\eps^{3}\) .$$

  Set for any $h=1, \ldots, k$ $B_h := B(\eps^\beta\tau_h, \eps^\beta \sigma/2)$ where $\sigma>0$ and small enough. For \eqref{taui} $B_h \subset B(0, r_0)$ and they are disjoint. We write
\begin{equation*}
\begin{aligned}
&\left|f\left(\sum_{i=1}^k \mathcal W_i\right)-\sum_{i=1}^k f(\mathcal W_i)\right|_{\frac{2N}{N+2}}\le c \left[\int_{ B(0, r_0)}(1-\chi^{p+1}(|x|))|\cdots|^{\frac{2N}{N+2}}|g(x)|^{\frac 12}\, dx \right]^{\frac{N+2}{2N}}\\
&+c \left[\int_{B(0, r_0)\setminus \cup_h B_h}|\cdots|^{\frac{2N}{N+2}}|g(x)|^{\frac 12}\, dx \right]^{\frac{N+2}{2N}} +c\sum_{h=1}^k \left[\int_{B_h}|\cdots|^{\frac{2N}{N+2}}|g(x)|^{\frac 12}\, dx \right]^{\frac{N+2}{2N}}\\
&\le c  \sum_{i=1}^k\left[\int_{ B(0, r_0)}(1-\chi^{p+1}(|x|))|W_i|^{\frac{2N}{N-2}}|g(x)|^{\frac 12}\, dx \right]^{\frac{N+2}{2N}}+c \left[\int_{B(0, r_0)\setminus \cup_h B_h}|W_i|^{\frac{2N}{N-2}}|g(x)|^{\frac 12}\, dx \right]^{\frac{N+2}{2N}}\\
&+ c \sum_{h=1}^k \left[\int_{B_h}\left| W_h^{p-1}\sum_{i\neq h} W_i\right|^{\frac{2N}{N+2}}|g(x)|^{\frac 12}\, dx \right]^{\frac{N+2}{2N}}+c \sum_{h=1}^k \left[\int_{B_h}\left| \sum_{i\neq h} W_i\right|^{\frac{2N}{N-2}}|g(x)|^{\frac 12}\, dx \right]^{\frac{N+2}{2N}}.
\end{aligned}
\end{equation*}
Let us estimate each term in the previous expression. We use \eqref{wi1}.
\begin{equation*}
\begin{aligned}
 \sum_{i=1}^k\left[\int_{B(0, r_0)}(1-\chi^{p+1}(|x|))|W_i|^{\frac{2N}{N-2}}|g(x)|^{\frac 12}\, dx \right]^{\frac{N+2}{2N}}&\le c \sum_{i=1}^k \left[\int_{\mathbb R^N\setminus B(0, r_0)}\frac{\mu_i^N}{|x-\eps^\beta \tau_i|^{2N}}\, dx\right]^{\frac{N+2}{2N}}\\
&\le c \sum_{i=1}^k \frac{\mu_i^{\frac{N+2}{2}}}{\eps^{\beta \frac{N+2}{2}}}\left[\int_{\mathbb R^N\setminus B(0, r_0/\eps^\beta)}\frac{1}{|y-\tau_i|^{2N}}\, dy\right]^{\frac{N+2}{2N}}\\
&\le c \eps^{(\alpha-\beta)\frac{N+2}{2}+\alpha\frac{N+2}{2}}\le c\eps^{3\frac {N+2}{2N}},
\end{aligned}
\end{equation*}
\begin{equation*}
\begin{aligned}
\left[\int_{B(0, r_0)\setminus \cup_h B_h}|W_i|^{\frac{2N}{N+2}}|g(x)|^{\frac 12}\, dx \right]^{\frac{N+2}{2N}}&\le c \frac{\mu_i^{\frac{N+2}{2}}}{\eps^{\beta\frac{N+2}{2}}}\left[\int_{B(0, r_0/\eps^{\beta})\setminus \cup_h B (\tau_h, \sigma/2)}\frac{1}{|y-\tau_i|^{2N}}\,dy \right]^{\frac{N+2}{2N}}\\
&\le C \eps^{(\alpha-\beta)\frac{N+2}{2}}\le C \eps^{3\frac {N+2}{2N}},
\end{aligned}
\end{equation*}
\begin{equation*}
\begin{aligned}
 \sum_{h=1}^k \left[\int_{B_h}\left| W_h^{p-1}\sum_{i\neq h} W_i\right|^{\frac{2N}{N+2}}|g(x)|^{\frac 12}\, dx \right]^{\frac{N+2}{2N}}&\le c \sum_{h=1}^k\sum_{i\neq h} \left[\int_{B_h}\frac{\mu_h^{\frac{4N}{N+2}}}{|x-\eps^\beta\tau_h|^{\frac{8N}{N+2}}}\frac{\mu_i^{\frac{N(N-2)}{N+2}}}{|x-\eps^\beta\tau_i|^{\frac{2N(N-2)}{N+2}}}\, dx\right]^{\frac{N+2}{2N}}\\
&\le c \sum_{h=1}^k\sum_{i\neq h}\mu_h^2 \mu_i^{\frac{N-2}{2}}\left[\int_{B_h}\frac{1}{|x-\eps^\beta \tau_h|^{\frac{8N}{N+2}}}\, dx\right]^{\frac{N+2}{2N}}\\
&\le c\sum_{h=1}^k\sum_{i\neq h} \mu_h^2 \mu_i^{\frac{N-2}{2}}\left[\int_{B(0, \eps^\beta \sigma/2)}\frac{1}{|y|^{\frac{8N}{N+2}}}\, dy \right]^{\frac{N+2}{2N}}\\
&\le c \sum_{h=1}^k\sum_{i\neq h}\mu_h^2 \mu_i^{\frac{N-2}{2}}\eps^{\beta \frac{N-6}{2}}\le c\eps^{3\frac {N+2}{2N}}
\end{aligned}
\end{equation*}
and
\begin{equation*}
\begin{aligned}
\sum_{h=1}^k \left[\int_{B_h}\left| \sum_{i\neq h} W_i\right|^{\frac{2N}{N-2}}|g(x)|^{\frac 12}\, dx \right]^{\frac{N+2}{2N}}&\le c\sum_{h=1}^k\sum_{i\neq h} \left[\int_{B_h}\frac{\mu_i^N}{|x-\eps^\beta \tau_i|^{2N}}\, dx\right]^{\frac{N+2}{2N}}\\
&\le C \frac{\mu_i^{\frac{N+2}{2}}}{\eps^{\beta\frac{N+2}{2}}}\left[\int_{B(\tau_h, \sigma/2)}\frac{1}{|y-\tau_i|^{2N}}\, dy\right]^{\frac{N+2}{2N}}\le c \eps^{3\frac {N+2}{2N}}.
\end{aligned}
\end{equation*}

\subsection{Proof of Proposition \ref{clu-rido}}
It is quite standard to prove that 
$$J_\eps\(\sum_{i=1}^k \mathcal W_i+\phi_\eps\)= J_\eps\(\sum_{i=1}^k \mathcal W_i\)+\Theta $$
$C^0-$ uniformly with respect to $(d_1,\dots,d_k, \tau_1,\dots,\tau_k )$ in compact subset of $  (0,+\infty) ^k\times\mathcal C,$ where $\Theta $ is a   smooth function  such that 
$  |\Theta|, |\nabla \Theta |=O(\eps^{3{N-2\over  N}+\zeta} ) $ for some small $\zeta>0.$ 
We shall prove that  
	\begin{align}\label{enk1}
		&J_\eps  \(\sum\limits_{i=1}^k\mathcal{W}_i\)  =kD_N
		+k\eps^2\[-A_N\left|\Weyl_g\(\xi_0\)\right|^2_g +  B_N d_0^2 \] \nonumber\\ &+ \eps^{3{N-2\over N}}\left[-{1\over2}A_N d_0^4\sum\limits_{i =1}^k\mathcal Q(\xi_0)(\tau_i,\tau_i) -E_Nd_0^{N-2}\sum\limits_{i,j=1\atop i\not=j}^k{1\over |\tau_i-\tau_j|^{N-2}}-B_N \sum\limits_{i=1}^kd_i^2   \right]
  +\Theta,
	\end{align}
	where
	\begin{align}\label{costanti}
		 A_N:=\frac{  K_N^{-N}}{24N\(N-4\)\(N-6\)},\ 
		B_N:= \frac{2\(N-1\)K_N^{-N}}{N\(N-2\)\(N-4\)}, \ D_N:=\frac{K_N^{-N}}{N},\  E_N:= \alpha_N  \int\limits_{\mathbb{R}^N}U^p(y)dy
	\end{align}
	and $K_N$ is the best constant for the embedding of $D^{1,2}\(\mathbb{R}^N\)$ into $L^{2^*}\(\mathbb{R}^N\).$ 
	Here  $\Theta $ is a   smooth function  such that 
$  |\Theta|, |\nabla \Theta |=O(\eps^{3{N-2\over  N}+\zeta} ) $ for some small $\zeta>0.$

Let us prove \eqref{enk1}.
 
	\begin{align}\label{ek1}
		 J_\eps  \(\sum\limits_{i=1}^k\mathcal{W}_i\)  =&\underbrace{\sum\limits_{i=1}^kJ_\eps  \(\mathcal{W}_i\)}_I-
		\underbrace{\sum\limits_{j<i}\int\limits_M  f\(\mathcal{W}_i\)\mathcal{W}_j d\nu_g}_{II}
			\nonumber\\
		&+ \sum\limits_{i<j}\int\limits_M \[\nabla_g\mathcal{W}_i\nabla_g\mathcal{W}_j +\beta_N\Scal_g\mathcal{W}_i\mathcal{W}_j -f\(\mathcal{W}_i\)\mathcal{W}_j\]d\nu_g\nonumber\\
		&
		-\int\limits_M \[F \(\sum\limits_{i=1}^k\mathcal{W}_i\) - \sum\limits_{i=1}^kF\(\mathcal{W}_i\) -\sum\limits_{i\not=j}f\(\mathcal{W}_i\)\mathcal{W}_j\]d\nu_g +\eps \sum\limits_{i<j}\int\limits_M \mathcal{W}_i\mathcal{W}_jd\nu_g.
	\end{align}
	First of all, we estimate the two leading terms $I$ and $II$ in \eqref{ek1}.\\
	The term $I$   is given by the contribution of each bubble. Indeed, 
	in Section 4 of \cite{ep} it was proved that for any $i=1,\dots,k$
	\begin{equation}\label{ek2}
		J_\eps \(\mathcal{W}_i\)
		=D_N -A_N\left|\Weyl_g\(\xi_i\)\right|^2_g\mu_i^4 +\eps B_N \mu_i^2 + \left\{O(\eps^{\frac52} )\ \hbox{if}\ N=7,\  O(\eps^3|\ln\eps|^3 )\ \hbox{if}\ N=8,\ O(\eps^3 )\ \hbox{if}\ N\ge9\right\}.\end{equation}
			Now, by the choice  of $d_0$ in \eqref{do} and the choice of $\mu_i,$ $\alpha$ and $\beta $ in \eqref{mui}, we get
\begin{align*}
	&\left|\Weyl_g(\xi_i)\right|^2=\left|\Weyl_g(\xi_0)\right|^2+{1\over2}\mathcal Q(\xi_0)[\tau_i,\tau_i]\eps^{2\beta}+O\(\eps^{3\beta}\),\\
	& \mu_i^4= \eps^{4\alpha}\[d_0^4+4d_0^3d_i\eps^\beta+6d_0^2d_i^2\eps^{2\beta}+O\(\eps^{3\beta}\)\],\\
	& \mu_i^2= \eps^{2\alpha}\[d_0^2+2d_0d_i\eps^\beta+ d_i^2\eps^{2\beta} \].
\end{align*}
 Therefore, a straightforward computation shows that 
	\begin{align}\label{ek21}-A_N\left|\Weyl_g\(\xi_i\)\right|^2_g\mu_i^4 +\eps B_N \mu_i^2 &=
	\eps^2\[-A_N\left|\Weyl_g\(\xi_0\)\right|^2_g +  B_N d_0^2 \]\nonumber \\ & + \eps^{3{N-2\over N}}\left[-{1\over2}A_N d_0^4   \mathcal Q(\xi_0)(\tau_i,\tau_i)  -B_N \sum\limits_{i=1}^kd_i^2   \right]+O\(\eps^{\frac{7N-18}{2N}}\).
	 \end{align}
	 By \eqref{ek2} and \eqref{ek21} we deduce the estimate of $I.$\\
	 
	The term $II$ is given by the interaction of   different bubbles.
	For any $h=1,\dots,k$ let  $B_h:=B( \eps^\beta\tau_h,\eps^\beta\sigma/2)$. By \eqref{taui}  we deduce that   $B_h\subset B(0,r_0)$ provided $\sigma$ is small enough and they are disjoint.  
Therefore, if  $i\not=j$ 
	\begin{align}\label{ek210}
		  \int\limits_M  f\(\mathcal{W}_i\)\mathcal{W}_j d\nu_g&= \int\limits_{B_i}f\(W_i(x)\)W_j(x)\left|g(x)\right|^{1/2}dx
		 +\int\limits_{B(0,r_0)\setminus B_i}f\(W_i(x)\)W_j(x)\left|g(x)\right|^{1/2}dx\nonumber\\ &
		+\int\limits_{B( 0,r_0)}\[1-\chi ^{p+1}\(|x|\)\]
		f\(W_i(x)\)W_j(x) \left|g(x)\right|^{1/2}dx\nonumber\\
		&=E_Nd_0^{N-2}{1\over|\tau_i-\tau_j|^{N-2}}\eps^{3\frac{N-2}N}+O\(\eps^3\).
	\end{align}
	Indeed, the main term of \eqref{ek210} is given by
	\begin{align}\label{ek3}
		&\int\limits_{B_i}f\(W_i(x)\)W_j(x)\left|g(x)\right|^{1/2}dx\nonumber\\ &=\int\limits_{B_i}
		f\( \mu_i^{-{N-2\over2}}U \({x-\eps^\beta\tau_i\over\mu_i}\)+\mu_i^{-{N-6\over2}}\eta\({x-\eps^\beta\tau_i\over\mu_i}\)V \({x-\eps^\beta\tau_i\over\mu_i}\)\)\times \nonumber\\ &\times
		\( \mu_j^{-{N-2\over2}}U \({x-\eps^\beta\tau_j\over\mu_j}\)+\mu_i^{-{N-6\over2}}\eta\({x-\eps^\beta\tau_j\over\mu_j}\)V \({x-\eps^\beta\tau_j\over\mu_j}\)\)\left|g(x)\right|^{1/2}dx\
		\nonumber\\ & \hbox{($\eta\({x-\eps^\beta\tau_j\over\mu_j}\)=0$ if $x\in B_i$ and $\eps$ is small enough)}   \nonumber\\ &=\int\limits_{B_i}
		f\( \mu_i^{-{N-2\over2}}U \({x-\eps^\beta\tau_i\over\mu_i}\)+\mu_i^{-{N-6\over2}}\eta\({x-\eps^\beta\tau_i\over\mu_i}\)V \({x-\eps^\beta\tau_i\over\mu_i}\)\)\times \nonumber\\ &\times
		\( \mu_j^{-{N-2\over2}}U \({x-\eps^\beta\tau_j\over\mu_j}\)\) \left|g(x)\right|^{1/2}dx\
		\nonumber\\ &  \hbox{(setting $x-\eps^\beta\tau_i=\mu_i y$)}\nonumber\\ &=\mu_i^{N-2\over2}\int\limits_{B( 0,\eps^\beta\sigma/2\mu_i)}
		f\( U \(y\)+\mu_i^2\eta(|y|) V \(y\)\)
		{\alpha_N\mu_j^{N-2\over2}\over \(\mu_j^2+|\mu_j y+ \eps^\beta(\tau_i-\tau_j)|^2\)^{N-2\over2}}\times \nonumber\\ &\times
		\left|g(\mu_iy+\eps^\beta\tau_i)\right|^{1/2}dy\nonumber\\ &
		=\alpha_N{\mu_i^{N-2\over2}\mu_j^{N-2\over2}\over \eps^{\beta(N-2)}|\tau_i-\tau_j|^{N-2}}\[\int\limits_{\mathbb{R}^N}U^p(y)dy+O\(\mu_i^2\)+O\(\({\mu_i\over\eps^\beta}\)^N\)\]\nonumber\\ &
		=\eps^{3{N-2\over N}}{d_0^{N-2 }\over  |\tau_i-\tau_j|^{N-2}} \alpha_N\int\limits_{\mathbb{R}^N}U^p(y)dy +O\(\eps^3\),
	\end{align}
	because of the choice of $\mu_i$ in \eqref{mui}.
	Moreover, by \eqref{wi1}
	\begin{align*} 
		&\left|\int\limits_{B(0,r_0)\setminus B_i}f\(W_i(x)\)W_j(x)\left|g(x)\right|^{1/2}dx\right| \nonumber\\ & \ \le c
		\int\limits_{ \mathbb{R}^N \setminus B_i}   {\mu_i^{N+2\over2}\over |x-\eps^\beta\tau_i | ^{ N+2}}{\mu_j^{N-2\over2}\over |x-\eps^\beta\tau_j | ^{ N-2}} dx \  \hbox{(setting $x  =\eps^\beta y$)}
		\nonumber\\ &\le c{\mu_i^{N+2\over2}\mu_j^{N-2\over2}\over\beta^N}
		\int\limits_{ \mathbb{R}^N \setminus B(\tau_i,\sigma/2)}   {1\over |y- \tau_i | ^{N+2}}{\mu_j^{N-2\over2}\over |y-\tau_j | ^{ N-2}} dy
		\nonumber\\ & =O\({\mu_i^{N+2\over2}\mu_j^{N-2\over2}\over\eps^{\beta N}}\)=O\(\eps^3\)
	\end{align*}
	and  
	\begin{align*} 
		&\left|\int\limits_{B( 0,r_0)}\[1-\chi ^{p+1}\(|x|\)\]
		f\(W_i(x)\)W_j(x) \left|g(x)\right|^{1/2}dx\right|  =O\(\eps^{N\over2}\).
	\end{align*}

	Finally, let us prove that all the other terms in \eqref{ek1} are of higher order.\\

	By \eqref{lb} and \eqref{gij}, we deduce that
	$$ \left| \Delta_g\mathcal{W}_i +\beta_N\Scal_g\mathcal{W}_i  -f\(\mathcal{W}_i\) \right|\(\hbox{exp}_{\xi_0}(x)\)\le c {\mu_i^{N-2\over2}\over\(\mu_i^2+|x-\eps^\beta\tau_i|^2\)^{N-2\over2}}$$
	and so by \eqref{wi1} if
	  $i\not=j$ we have
	\begin{align*}
		& \left|\int\limits_M \[\nabla_g\mathcal{W}_i\nabla_g\mathcal{W}_j +\beta_N\Scal_g\mathcal{W}_i\mathcal{W}_j -f\(\mathcal{W}_i\)\mathcal{W}_j\]d\nu_g\right| \nonumber\\ &
		= \left|\int\limits_M \[\Delta_g\mathcal{W}_i +\beta_N\Scal_g\mathcal{W}_i  -f\(\mathcal{W}_i\)\]\mathcal{W}_j d\nu_g\right|
		\nonumber\\ &\le c\int\limits_{B( 0,r_0) }   {\mu_i^{N-2\over 2}\over |x-\eps^\beta\tau_i| ^{ N-2}}{\mu_j^{N-2\over2}\over |x-\eps^\beta\tau_j| ^{N-2 }} dx
		\ \hbox{(setting $x=\eps^\beta y$)}\nonumber\\ &\le c
		{ \mu_i^{N-2\over2}\mu_j^{N-2\over2}\over \eps^{\beta N-4}} \int\limits_{ \mathbb{R}^N }   {1\over |y-\tau_i| ^{ N-2}}{1\over |y-\tau_j| ^{ N-2  }} dx=O(\eps^3) .
	\end{align*}
	Moreover, 
	if $i\not=j$
	\begin{align*}
		&\left|\int\limits_M\mathcal{W}_i\mathcal{W}_jd\nu_g\right|=\left|\int\limits_{B(0,r_0) }W_i(x)W_j(x)\left|g(x)\right|^{1/2}dx \right| \nonumber\\ & \ \le 
		c\int\limits_{B( 0,r_0) }   {\mu_i^{N-2\over 2}\over |x-\eps^\beta\tau_i| ^{ N-2}}{\mu_j^{N-2\over2}\over |x-\eps^\beta\tau_j| ^{N-2 }} dx
		\ \hbox{(setting $x=\eps^\beta y$)}\nonumber\\ &\le c
		{ \mu_i^{N-2\over2}\mu_j^{N-2\over2}\over \eps^{\beta N-4}} \int\limits_{ \mathbb{R}^N }   {1\over |y-\tau_i| ^{ N-2}}{1\over |y-\tau_j| ^{ N-2  }} dx=O(\eps^3) .
	\end{align*}
	Finally, we have
	\begin{align*}
		&\int\limits_M \[F \(\sum\limits_{i=1}^k\mathcal{W}_i\) - \sum\limits_{i=1}^kF\(\mathcal{W}_i\) -\sum\limits_{i\not=j}f\(\mathcal{W}_i\)\mathcal{W}_j\]d\nu_g\nonumber\\  & 
		=\sum\limits_{h=1}^k \int\limits_{B_h}\[F \(\sum\limits_{i=1}^kW_i\) - \sum\limits_{i=1}^kF\(W_i\) -\sum\limits_{i\not=j}f\(W_i\)W_j\]\left|g(x)\right |^{1/2}dx\\ &+\int\limits_{B( 0,r_0) \setminus \cup_h B_h}\[F \(\sum\limits_{i=1}^kW_i\) - \sum\limits_{i=1}^kF\(W_i\) -\sum\limits_{i\not=j}f\(W_i\)W_j\]\left|g(x)\right |^{1/2}dx\\
	&	+\int\limits_{B( 0,r_0)}\[1-\chi ^{p+1}\(|x|\)\]
		\[F \(\sum\limits_{i=1}^kW_i\) - \sum\limits_{i=1}^kF\(W_i\) -\sum\limits_{i\not=j}f\(W_i\)W_j\] \left|g(x)\right |^{1/2}dx.
	\end{align*}
	It is immediate that
	$$\int\limits_{B( 0,r_0)}\[1-\chi ^{p+1}\(|x|\)\]
		\[F \(\sum\limits_{i=1}^kW_i\) - \sum\limits_{i=1}^kF\(W_i\) -\sum\limits_{i\not=j}f\(W_i\)W_j\] \left|g(x)\right |^{1/2}dx=O\(\eps^{N\over2}\).$$
Moreover, outside the $k$ balls we get
	
	\begin{align*}
		&\int\limits_{B( 0,r_0) \setminus \cup_h B_h}\left|F \(\sum\limits_{i=1}^kW_i\) - \sum\limits_{i=1}^kF\(W_i\) -\sum\limits_{i\not=j}f\(W_i\)W_j\right|\left|g(x)\right |^{1/2}dx\nonumber\\ &
		\le c  \sum\limits_{i\not=j} \int\limits_{B( 0,r_0) \setminus \cup_h B_h}  \(|W_i|^{p-1}W_j^2+ |W_j|^{p-1}W_i^2\)dx=O\(\eps^3\),
	\end{align*}
	because   if $2<q=p+1<3$
$$
		\left|(a+b)^q-a^q-b^q-qa^{q-1}b-qab^{q-1}
		\right|\le c \(a^2b^{q-2}+a^{q-2}b^2\)\ \hbox{for any}\ a,b>0 $$
	and  if $j\not=i$  
	\begin{align*}
		&\int\limits_{B( 0,r_0) \setminus \cup_h B_h}   |W_i|^{p-1}W_j^2 dx\le
		c\int\limits_{B( 0,r_0) \setminus \cup_h B_h}   {\mu_i^{2}\over |x-\eps^\beta\tau_i| ^{ 4}}{\mu_j^{N-2}\over |x-\eps^\beta\tau_j| ^{2(N-2) }} dx
		\ \hbox{(setting $x=\eps^\beta y$)}\nonumber\\ &\le c
		{ \mu_i^{2}\mu_j^{N-2}\over \eps^{\beta N}} \int\limits_{ \mathbb{R}^N \setminus \cup_h B(\tau_h,\sigma/2)}   {1\over |y-\tau_i| ^{ 4}}{1\over |y-\tau_j| ^{2(N-2) }} dx.
	\end{align*}
	On each ball $B_h$ we also have
	\begin{align}\label{ek7}
		&\int\limits_{B_h}\left|F \(\sum\limits_{i=1}^kW_i\) - \sum\limits_{i=1}^kF\(W_i\) -\sum\limits_{i\not=j}f\(W_i\)W_j\right|\left|g(x)\right\|^{1/2}dx
		\nonumber\\ &\le
		\int\limits_{B_h}\left|F \(W_h+\sum\limits_{i\not=h}W_i\) -F(W_h)-\sum\limits_{j\not=h}f(W_h)W_j\right| dx\nonumber\\ &
		+\sum\limits_{i\not=h}\int\limits_{B_h}\left|F\(W_i\)\right|dx+\sum\limits_{i\not=h\atop j\not=i}\int\limits_{B_h}
		\left|f\(W_i\)W_j\right| dx\nonumber
		\\ &\le c\sum\limits_{i\not=h}\int\limits_{B_h} W_h^{p-1}W_i^2dx+c\sum\limits_{i\not=h}\int\limits_{B_h} W_i^{p+1} dx+
		c\sum\limits_{i\not=h\atop j\not=i}\int\limits_{B_h}
		 W_i^pW_j dx
		,\end{align}
		because   if $ q=p+1\ge 1$
$$
		\left|(a+b)^q-a^q -qa^{q-1}b 
		\right|\le c \( b^{q}+a^{q-2}b^2\)\ \hbox{for any}\ a,b>0. $$
Now we use  \eqref{wi1} and we get if $i\not=h$
\begin{align*}
		&\int\limits_{B_h} W_h^{p-1}W_i^2dx\le c\int\limits_{B_h}
		{\mu_h^{  2}\over |x-\eps^\beta\tau_h| ^{4}}{\mu_i^{N-2 }\over |x-\eps^\beta\tau_i| ^{2(N-2) }} dx\ \hbox{(setting $x=\eps^\beta y$)}\nonumber\\ &\le c{ \mu_h^2\mu_i^{N-2}\over \eps^{\beta N}}\int\limits_{B(\tau_h,\sigma/2)}
		{1\over |y- \tau_h| ^{4}}{1\over |y-\tau_i| ^{2(N-2) }} dy=O\(\eps^3\), 
	\end{align*}
	 if $j,i\not=h$   
	\begin{align*}
		&\int\limits_{B_h} |W_i|^{p}W_j dx\le  c \int\limits_{B_h}
		{\mu_i^{N+2\over 2}\over |x-\eps^\beta\tau_i| ^{N+2 }}{\mu_j^{N-2\over 2}\over |x-\eps^\beta\tau_j| ^{N-2 }} dx
		\nonumber \\ &\le
		c {\mu_i^{N+2\over2}\mu_j^{N-2\over2}\over\eps^{2 \beta N}} |B_h| \le c {\mu_i^{N+2\over2}\mu_j^{N-2\over2}\over\eps^{  \beta N}}=O\(\eps^3\),  \end{align*}
	 if  $i\not=h$
	\begin{align*}
		&\int\limits_{B_h}   |W_i|^{p}W_h dx \le c  \int\limits_{B_h}
		{\mu_i^{N+2\over 2}\over |x-\eps^\beta\tau_i| ^{N+2 }}{\mu_h^{N-2\over 2}\over |x-\eps^\beta\tau_h| ^{N-2 }} dx
		\nonumber \\ &\le
		c  {\mu_i^{N-2\over2}\over\eps^{ \beta (N+2)}}\int\limits_{B_h}
		{1\over |x-\eps^\beta\tau_h| ^{N-2 }} dx  \le c  {\mu_i^{N-2\over2}\mu_h^{N-2\over2}\over\eps^{ \beta N}} =O\(\eps^3\)   \end{align*}
	and  if  $i\not=h$
	$$\int\limits_{B_h}  W_i^{p+1}  dx\le c
		 \int\limits_{B_h}{\mu_i^N\over |x-\eps^\beta\tau_i| ^{2N}}dx \le c {\mu_i^N\over \eps^{2\beta N}}|B_h| \le    c
		 {\mu_i^N\over \eps^{\beta N }}=O\(\eps^3\).
$$
	 That concludes the proof.


\begin{thebibliography}{999}
	
	\bibitem{a} T. Aubin {\em  Equations diff\'erentielles non lin\'eaires et probl\`eme de Yamabe concernant la courbure scalaire},
 J. Math. Pures Appl. (9)  
  (1976)
 269--296 
	
	
	\bibitem{ba} A. Bahri {\em Proof of the Yamabe conjecture, without the positive mass theorem, for locally conformally flat manifolds. Einstein metrics and Yang-Mills connections} (Sanda, 1990), 1–26, Lecture Notes in Pure and Appl. Math., 145, Dekker, New York, 1993. 
	
	
	\bibitem{be} G. Bianchi and H. Egnell {\em A note on the Sobolev inequality},
 {J. Funct. Anal.}
 {100}
(1991) 1
18--24


 
   \bibitem{b} S. Brendle {\em Blow-up phenomena for the Yamabe equation}, J. Amer. Math. Soc. 21 (2008), no. 4, 951–979.
   
   \bibitem{bm} S. Brendle and F.C. Marques {\em Blow-up phenomena for the Yamabe equation. II}, J. Differential Geom. 81 (2009), no. 2, 225–250.
   
    
   
   \bibitem{d1} O. Druet {\em Compactness for Yamabe metrics in low dimensions}, Int. Math. Res. Not. 23 (2004), 1143–1191. 
   
      \bibitem{ep}
 P. Esposito and A. Pistoia {\em Blowing-up solutions for the Yamabe equation}, Port. Math. 71 (2014), no. 3-4, 249--276

   \bibitem{epv} P. Esposito, A. Pistoia, and J. Vétois {\em The effect of linear perturbations on the Yamabe problem,} Math. Ann. 358 (2014), no. 1–2, 511--560. 
   
   \bibitem{kms} M.A. Khuri, F.C. Marques, and R.M. Schoen {\em A compactness theorem for the Yamabe problem}, J. Differential Geom. 81 (2009), no. 1, 143--196
   
   \bibitem{mp} J.M. Lee and T.H. Parker, The Yamabe problem, Bull. Amer. Math. Soc. (N.S.) 17 (1987), no. 1, 37–91.
   
   \bibitem{lzha} Y.Y. Li and L. Zhang  {\em  Compactness of solutions to the Yamabe problem. III,} J. Funct. Anal. 245 (2007), no. 2, 438--474. 
   
   \bibitem{lz} Y.Y. Li and M.J. Zhu {\em Yamabe type equations on three-dimensional Riemannian manifolds}, Commun. Contemp. Math. 1 (1999), no. 1, 1–50. 
   
   \bibitem{m} F.C. Marques, A priori estimates for the Yamabe problem in the non-locally conformally flat case, J. Differential Geom. 71 (2005), no. 2, 315–346. 
   
     \bibitem{mpv} F. Morabito, A. Pistoia, G. Vaira {\em in preparation}
   
    \bibitem{o} M. Obata, {\em The conjectures on conformal transformations of Riemannian manifolds}, J. Differential Geom. 6 (1972) 247–258,
  
  \bibitem{p} D. Pollack,
{\em Nonuniqueness and high energy solutions for a conformally invariant scalar curvature equation}, Comm. Anal. and Geom.
1 (1993) 347--414,
  
   \bibitem{pv}A. Pistoia, G. Vaira  {\em From periodic ODE's to supercritical PDE's}. Nonlinear Anal. 119 (2015), 330–340.
   
\bibitem{rv}
F. Robert and J. V\'etois {\em Examples of non-isolated blow-up for perturbations of the scalar curvature equation on non-locally conformally flat manifolds},
J. Differential Geom. 98 (2014), no. 2, 349–356. 

   \bibitem{rv2} F. Robert and J. V\'etois {\em A general theorem for the construction of blowing-up solutions to some elliptic nonlinear equations via Lyapunov-Schmidt's reduction,} Concentration Compactness and Profile Decomposition (Bangalore, 2011), Trends in Mathematics, Springer, Basel, 2014, pp. 85–116. 
   
      
  
  \bibitem{s1} 
R. Schoen
{\em Conformal  deformation  of  a Riemannian  metric  to constant
scalar curvature}, J. Differential Geometry
20
(1984) 479--495,
    
    \bibitem{s2} 
R. Schoen {\em Variational theory for the total scalar curvature functional for Riemann-
ian metrics and related topics}, in `Topics in Calculus of Variations,' Lecture
Notes in Mathematics, Springer-Verlag, New York,
1365, 1989,

    \bibitem{s3} R.M. Schoen, Notes from graduates lecture in Stanford University (1988). http://www.math.washington.edu/pollack/research/Schoen-1988-notes.html. 

   \bibitem{s4} R.M. Schoen  {\em On the number of constant scalar curvature metrics in a conformal class},
  Differential geometry, {Pitman Monogr. Surveys Pure Appl. Math.}, {52}, { Longman Sci. Tech.}, {Harlow},
 1991, 311--320 

  \bibitem{sz} R.M. Schoen and D. Zhang, {\em Prescribed scalar curvature on the n-sphere}, Calc. Var. and PDEs 4 (1996) 1--25
  
\bibitem{t} N.S. Trudinger,  {\em Remarks concerning the conformal deformation of Riemannian structures on compact manifolds},
 Ann. Scuola Norm. Sup. Pisa (3) 
 (1968) 265--274 

\bibitem{y} H. Yamabe {\em On a deformation of Riemannian structures on compact manifolds}
Osaka Math. J. 12
(1960) 21--37


\end{thebibliography}
\end{document}